\newcommand{\R}{\mathbb R}   
\newcommand{\D}{\displaystyle}
\newtheorem{thm}{Theorem} [section]
\newtheorem{rem}[thm]{Remark}
\newtheorem{lem}[thm]{Lemma}
\newtheorem{cor}[thm]{Corollary}
\def\qq#1{\qquad \mbox{#1}\quad}
\newcommand{\al}{\alpha}
\newcommand{\de}{\delta}
\newcommand{\e}{\varepsilon}
\newcommand{\g}{\gamma}
\newcommand{\G}{\Gamma}
\newcommand{\la}{\lambda}
\newcommand{\om}{\omega}
\newcommand{\Om}{\Omega}
\newcommand{\p}{\partial}
\begin{document}

\title[]{A priori bounds for positive solutions of subcritical elliptic equations}

\author[A. Castro, R. Pardo]{Alfonso Castro, Rosa Pardo}

\address{A. Castro,
Department of Mathematics, Harvey Mudd College, Claremont, CA 91711, USA.} \email{{\tt castro@math.hmc.edu }}

\address{R. Pardo,
Departamento de Matem\'atica Aplicada, Universidad Complutense de Madrid, 28040--Madrid, Spain.} \email{{\tt rpardo@mat.ucm.es }}

\thanks{This work was partially supported by a grant from the  Simons Foundations (\# 245966 to Alfonso Castro). The second author is supported Spanish Ministerio de Ciencia e Innovacion (MICINN) under
Project MTM2012-31298. This work was started during a sabbatical visit of the second author to the Department of Mathematics, Harvey Mudd College, Claremont, USA, whose hospitality she thanks.}

\date{}

\begin{abstract}
We provide a-priori $L^\infty$ bounds for positive solutions to a class of subcritical elliptic problems in  bounded  $C^2$ domains.
Our arguments rely on the  moving planes method applied on the Kelvin transform of solutions.  We prove that  locally the image  through the inversion map of a neighborhood of the boundary contains a convex neighborhood; applying the moving planes method, we prove that  the transformed functions have  no  extremal   point  in a  neighborhood of the boundary of the inverted domain. Retrieving the original solution $u$,  the maximum of any positive solution in the domain $\Om,$ is bounded above by a constant  multiplied by the maximum on an open subset strongly contained in $\Om.$ The constant and the open subset  depend only on geometric properties of $\Om,$ and are independent of the non-linearity and on the solution  $u$. Our analysis answers a longstanding open problem.
\end{abstract}

\keywords{} \subjclass[2000]{35B32, 35B34, 35B35, 58J55, 35J25, 35J60, 35J65}

\maketitle


\section{Introduction}

We provide a-priori $L^\infty(\Om)$ bounds for  a classical positive solutions to  the  boundary-value problem:
\begin{equation}
\label{eq:elliptic:problem} \left\{ \begin{array}{rcll} -\Delta u
&=&f(u), & \qquad \mbox{in } \Omega, \\ u&=& 0, & \qquad \mbox{on } \partial \Omega,
\end{array}\right.
\end{equation}
where $\Omega \subset \R ^N $,  $N\geq 2,$ is a bounded $C^{2}$ domain, and $f$ is a subcritical nonlinearity. For simplicity we assume $N>2,$ but our techniques fits well to the case $N=2.$ Our main result is:

\begin{thm}\label{th:apriori}
Assume that $\Omega \subset \R ^N $ is a bounded  domain  with $C^{2}$  boundary.
Assume that the nonlinearity $f$ is locally Lipschitzian and satisfies the following conditions
\begin{enumerate}
\item[\rm (H1)]   $\dfrac{f(s)}{s^{N^\star }}$ is  nonincreasing for any $s\geq 0,$ where  $N^\star =\frac{N+2}{N-2}$,
  \item[\rm (H2)] $f$ is {\it subcritical}, i.e.
  $\D\lim_{s\to\infty} \dfrac{f(s)}{s^{N^\star }}=0,$
  \item[\rm (H3)]
  $\D\liminf_{s\to\infty} \dfrac{f(s)}{s}>\lambda _1,$
where $\lambda_1$ is the first eigenvalue of $-\Delta$ acting on $H^1_0(\Om).$
\end{enumerate}
Then there exists a uniform constant $C ,$ depending only on $\Omega$ and $f,$ such that for every $u>0,$  classical solution to \eqref{eq:elliptic:problem},
$$\|u\|_{L^\infty (\Om)}\leq C.
$$
\end{thm}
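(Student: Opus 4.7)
The plan is to split the estimate into a \emph{boundary estimate}, which produces a tubular collar $\omega\subset\Om$ of $\p\Om$ whose width depends only on $\Om$ and in which no positive classical solution $u$ of \eqref{eq:elliptic:problem} can attain its supremum, and an \emph{interior estimate}, which bounds $\|u\|_{L^\infty(\Om\setminus\omega)}$ through (H2)--(H3) and a blow-up/Liouville argument.

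Fix $x_0\in\p\Om$. Since $\p\Om$ is $C^2$, there is an exterior tangent ball $B(x_0^*,R)$ at $x_0$ with $R$ bounded below uniformly on $\p\Om$. Consider the inversion $T(x)=x_0^*+R^2(x-x_0^*)/|x-x_0^*|^2$ and the Kelvin transform $v(y)=(R/|y-x_0^*|)^{N-2}u(T(y))$. A direct computation shows that on $\Om^*:=T(\Om)$ one has
$$
-\Delta v(y)\,=\,v(y)^{N^\star}\,F\bigl((|y-x_0^*|/R)^{N-2}v(y)\bigr),\qquad F(s):=f(s)/s^{N^\star},
$$
and by (H1) the function $F$ is nonincreasing on $(0,\infty)$. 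This is the structural reason the exponent $N^\star$ appears in (H1): it is the unique exponent for which the Kelvin inversion removes all weights from the power part of the nonlinearity.

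Because $T$ is conformal and maps the tangent plane to $\p\Om$ at $x_0$ to a sphere through $x_0^*$ of radius $R/2$, a second-order expansion together with the uniform $C^2$ bound on $\p\Om$ shows that $\p\Om^*$ is, near $y_0:=T(x_0)=x_0$, the graph of a $C^2$ function whose Hessian is bounded below by a positive constant depending only on $\Om$ (possibly after shrinking $R$). Hence $\Om^*$ contains a \emph{uniform convex cap} attached to $y_0$ with inward normal $\nu_0=\nu(x_0)$. On this cap I apply the Alexandrov--Serrin moving planes procedure to $v$ in the direction $\nu_0$: reflection across a plane $\{y\cdot\nu_0=\la\}$ moves points closer to $x_0^*$, so $r(y):=|y-x_0^*|/R$ decreases under reflection; combined with the monotonicity of $F$ and the local Lipschitz character of $f$, the difference $w=v^\la-v$ satisfies a linear elliptic inequality to which the maximum principle applies, giving $v^\la\le v$ on the reflected half and hence $\p v/\p\nu_0<0$ across a uniform sub-cap. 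Pulling back through $T^{-1}$ and letting $x_0$ vary over the compact set $\p\Om$ produces the collar $\omega$ announced above, so $\|u\|_{L^\infty(\Om)}=\|u\|_{L^\infty(\Om\setminus\omega)}$.

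To close the proof, I argue by contradiction: suppose a sequence of positive solutions $\{u_n\}$ satisfies $M_n:=\|u_n\|_{\infty}=u_n(x_n)\to\infty$ with $x_n\in\overline{\Om\setminus\omega}$; the hypothesis (H3), tested against the first eigenfunction $\vf_1$, provides an a priori $L^1$ lower bound that rules out collapse to zero. Rescale $w_n(z)=u_n(x_n+\mu_n z)/M_n$ with $\mu_n^2 M_n^{N^\star-1}=1$; (H2) and standard interior elliptic estimates, together with the positive distance of $x_n$ from $\p\Om$, yield a subsequential limit $w_\infty$ which is a bounded positive entire solution of $-\Delta w=w^{N^\star}$ on $\R^N$, contradicting the Caffarelli--Gidas--Spruck Liouville theorem. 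The main obstacle is the geometric step: establishing the convex cap and carrying out the moving planes argument with \emph{quantitative} constants independent of both $x_0\in\p\Om$ and the solution $u$. One must control precisely how a $C^2$ hypersurface deforms under spherical inversion and verify that (H1) is the exact structural hypothesis allowing the comparison to propagate across the full uniform sub-cap rather than just in an infinitesimal neighborhood of $y_0$; this uniformity is what makes $\omega$ independent of $u$ and of $f$.
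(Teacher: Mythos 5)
Your proposal correctly identifies the two key structural points that the paper exploits: the Kelvin inversion at an exterior tangent ball removes the $|y|$--weight from the power part of the nonlinearity precisely because the exponent in (H1) is $N^\star=\frac{N+2}{N-2}$, so that $-\Delta v = v^{N^\star}\,\frac{f(|y|^{N-2}v)}{(|y|^{N-2}v)^{N^\star}}$ and moving planes can be run on the inverted domain; and the interior bound is the place where (H2)--(H3) enter. However, both of your steps contain genuine gaps.

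\textbf{Boundary step.} You conclude that the collar $\omega$ contains no point where $u$ attains its supremum, and write $\|u\|_{L^\infty(\Omega)}=\|u\|_{L^\infty(\Omega\setminus\omega)}$. This overstates what the moving--planes argument on $v$ delivers. What you actually learn is that $v(y)=\bigl(R/|y-x_0^*|\bigr)^{N-2}u(T(y))$ has no critical points in a cap of the inverted domain; pulling back through $T^{-1}$, this says that the weighted function $|x-x_0^*|^{N-2}u(x)$ has no critical points in a uniform boundary region, \emph{not} that $u$ itself has none. Because of this weight, the best you can extract on $u$, using $\rho\le|x-x_0^*|\le R$, is the multiplicative inequality $\max_{\Omega}u\le (R/\rho)^{N-2}\max_{\Omega_\delta}u$, which is exactly Theorems \ref{th:bdd:int} and \ref{th:comp}. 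Your stronger claim -- that a positive solution has no stationary points in a fixed collar of a nonconvex $C^2$ domain -- is precisely the Gidas--Ni--Nirenberg problem recalled after Corollary \ref{co}, which the paper explicitly records as still open for $N>2$. Fortunately the multiplicative bound suffices, but you must replace the equality with it.

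\textbf{Interior step.} The blow-up with $\mu_n^2M_n^{N^\star-1}=1$ does not produce the limit equation $-\Delta w=w^{N^\star}$ under (H2). Indeed in rescaled coordinates $-\Delta w_n = w_n^{N^\star}\,\dfrac{f(M_nw_n)}{(M_nw_n)^{N^\star}}$, and by (H2) the last factor goes to $0$ wherever $w_n$ stays bounded away from zero. The subsequential limit $w_\infty$ is therefore a bounded nonnegative \emph{harmonic} function on $\R^N$, hence constant, and since $w_\infty(0)=1$ one gets $w_\infty\equiv 1$: no contradiction. This is exactly the reason Gidas--Spruck must assume $f(s)/s^{p}$ converges to a positive constant, and it is exactly the case the present paper is designed to cover. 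The paper avoids blow-up entirely: (H3) tested against the first eigenfunction gives $\int_\Omega u_k\phi_1\le C_1$ and $\int_\Omega f(u_k)\phi_1\le C_1$; (H2) then forces $\|u_k\|_\infty^{-1}\,\|f(u_k)\|_{L^q(B_{d_0/2})}\to 0$ for a suitable $q<\tfrac{N+2}{4}$; interior $W^{2,q}$ estimates make $w_k=u_k/\|u_k\|_\infty$ precompact on a smaller interior ball; the $L^1$ bound forces the limit to vanish; and this contradicts $w_k(x_k)\ge 1/C$ from the boundary step. You would need to supply this (or a working substitute) in place of the Liouville argument, because the latter does not run on hypotheses as weak as (H1)--(H3).
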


Theorem \ref{th:apriori} answers a longstanding open problem, raised for instance in  \cite{Figueiredo-Lions-Nussbaum} as well as in \cite{Gidas-Spruck}.  Our analysis substantially  extends previous  results. In \cite{Figueiredo-Lions-Nussbaum} the nonlinearity $f$ is assumed to satisfy
$$\limsup_{s \to +\infty} (sf(s) - \theta F(s))/(s^2f^{N/2}(s)) \leq 0\qq{for some}\theta \in [0,2N/(N-2)),$$
where $F(s)=\int_0^t f(s)ds.$
The results in \cite{Gidas-Spruck} depend heavily on the {\it blow up} method which requires $f$ to be essentially of the form $f=f(x,s) = h(x)s^p$ with $p \in (1, N^*)$ and $h(x)$ continuous and strictly positive. Functions such as $f_1(s) = s^{N^*}/(\ln(s+2)$ satisfy our hypotheses but not those of \cite{Figueiredo-Lions-Nussbaum} neither of \cite{Gidas-Spruck}.

\smallskip

Next we provide an example of a nonlinearity $f$ that satisfies our hypotheses but not those of \cite{Gidas-Spruck}. Let $1 < p < q < N^*$.  Let $a_1$ be any real number larger than 1. Inductively we define $b_j=a_j^{(N^*-p)/(N^*-q)},$ and $a_{j+1} = b_j^{q/p}.$ Thus $a_j\leq b_j\leq a_{j+1}$ and
$\{a_j\},\{b_j\}$ are increasing sequences converging to $+\infty$.  We define $f(s) = s^p$ for $s\in [0,a_1]$.   Inductively, we define $f$ on
$[a_j, b_{j}]\cup [b_j, a_{j+1}]$  for $j = 1, 2, \ldots$ in the following way: $f(s) =   s^{N^*}/a_j^{N^*-p}$ for $s \in [a_j, b_j]$ and
$f(s) = f\left(b_j\right)$ for $s \in [b_j, a_{j+1}]$. It is easily seen that $s^p \leq f(s)\leq s^q$ for all $s \geq 1$. Hence
$f$ satisfies (H2) and (H3). Since $f$ is a multiple of $s^{N^*}$ on $[a_j, b_j]$, $f(s)/s^{N^*}$ is constant in that interval. On the the other hand, in $[b_j, a_{j+1}]$, $f$ is constant. Hence, in $[b_j, a_{j+1}]$, $f(s)/s^{N^*}$ decreases. Thus hypothesis (H1) is satisfied.
Since $f(a_j) = a_j^p$ and $f(b_j) = b_j^q,$ there is no $\al \in (1, N^*)$ such that $\lim_{s \to +\infty} f(s)/s^{\al} \in \mathbb{R}$.
Thus $f$ does not satisfy the hypotheses of Gidas-Spruck (see \cite[Theorem 1.1]{Gidas-Spruck}).

\medskip

Our proof of the Theorem \ref{th:apriori} uses  {\it moving plane arguments}, as in \cite{Figueiredo-Lions-Nussbaum}, as well as {\it Kelvin transform}. For the sake of completeness in the presentation,  below  we define the Kelvin transform, and in section 2 we recall results on moving plane arguments to be applied in section 3 in the proof of Theorem \ref{th:apriori}.

\smallskip

Applying the Kelvin transform to positive solutions of \eqref{eq:elliptic:problem}, the moving planes method
determines regions where the transformed function has no critical point. Recovering then the solution $u$, one sees that its maximum in the entire domain $\Om,$ is bounded above by a constant $C$ multiplied by the maximum of the same solution on an open subset $\om$ strongly contained in $\Om.$ The constant $C$ and the open subset  $\om\subset\subset \Om,$ depend only on geometric properties of $\Om,$ and they are independent of $f$ and $u,$ see Theorem \ref{th:comp}. This Theorem is a compactification process of a local version given earlier in  Theorem \ref{th:bdd:int}.

\medskip

The moving planes method was used earlier by Serrin in \cite{Serrin}.
For second order elliptic equations with spherical symmetry satisfying over-determined boundary conditions, he proved that positive solutions exists only when the domain is a ball and the solution is  spherically symmetric.
The proof is based on Maximum Principle and the moving planes method, which basically moves plains to a critical position, and then show that the solution is symmetric about this limiting plane.

\smallskip

Gidas-Ni and Nirenberg in \cite{Gidas-Ni-Nirenberg}, using this moving planes method and the Hopf Lemma, prove symmetry of positive solutions of elliptic equations vanishing on the boundary. See also Castro-Shivaji \cite{Castro-Shivaji}, where symmetry of nonnegative solutions is established for $f(0) < 0$. In \cite{Gidas-Ni-Nirenberg} the authors also characterized regions inside of $\Omega,$ next to the convex part of the boundary, where a positive solution cannot  have  critical points. Those regions depend only on the local convexity of $\Omega,$ and are independent of $f$ and $u.$ This non-existence of critical points in a whole region, is due to a strict monotonicity property of any positive solution in the normal direction.
This  is a key point to reach our results.

\smallskip

Gidas, Ni and Nirenberg in their classical paper pose the following problem 33 years ago which to the knowledge of the authors, is still open, see \cite[p. 223]{Gidas-Ni-Nirenberg}.

\smallskip

{\it Problem: Suppose $u>0$ is a classical solution of \eqref{eq:elliptic:problem}. Is there some $\e>0$ only dependent on the geometry of $\Om$ (independent of $f$ and $u$) such that $u$ has no stationary points in a $\e$-neighborhood of $\p\Om$?}

\smallskip

This is  true in convex domains, and for $N=2,$ see \cite[Corollary 3 and p. 223]{Gidas-Ni-Nirenberg}.
The question is now what about non-convex domains with $N>2.$

\smallskip

Our contribution is the following one: there are some $C$ and $\de>0$  depending only on the geometry of $\Om$ (independent of $f$ and $u$) such that
\begin{equation}\label{ineq:Om:de}
\max_{\Om} u\leq C \ \max_{\Om_{\de} } u
\end{equation}
where $\Om_{\de}:=\{x\in\Om\ : \ d(x,\p\Om)>\de\},$
see Theorem \ref{th:comp}.

\medskip

To reach our answer in this situation, let us start by defining the Kelvin transform, see \cite[proof of theorem 4.13, p. 66-67]{G-T}.

Let us recall that every $C^2$ domain  $\Om$  satisfy the following condition, known as the {\it uniform exterior sphere condition},
\begin{itemize}
\item[(P)] there exists a $\rho>0$ such that for every $x\in\p\Om$ there exists a ball $B=B_{\rho} (y)\subset\mathbb{R}^N\setminus\Om$ such that $\p B\cap\p\Om=x.$
\end{itemize}

\smallskip

Let $x_0\in \p\Om,$ and let $\overline{B}$ be the closure of a ball intersecting $\overline{\Om}$ only at the point $x_0.$
Let us  suppose $x_0=(1,0,\cdots,0),$ and $B$ is the unit ball with center at the origin. The {\it inversion mapping}
\begin{small}
\begin{equation}\label{def:h}
x\to h(x)=\frac{x}{|x|^2},
\end{equation}
\end{small}
is an homeomorphism from $\R^N\setminus\{0\}$ into itself. We perform an inversion  from $\Om$  into the unit ball $B,$ in terms of the inversion map $ h\left|_\Om\right. ,$  see fig. \ref{fig3_4_5} (a).

\begin{figure}[tb]
$$\kern -2em
\begin{array}{ccc}
\includegraphics[width=6cm]{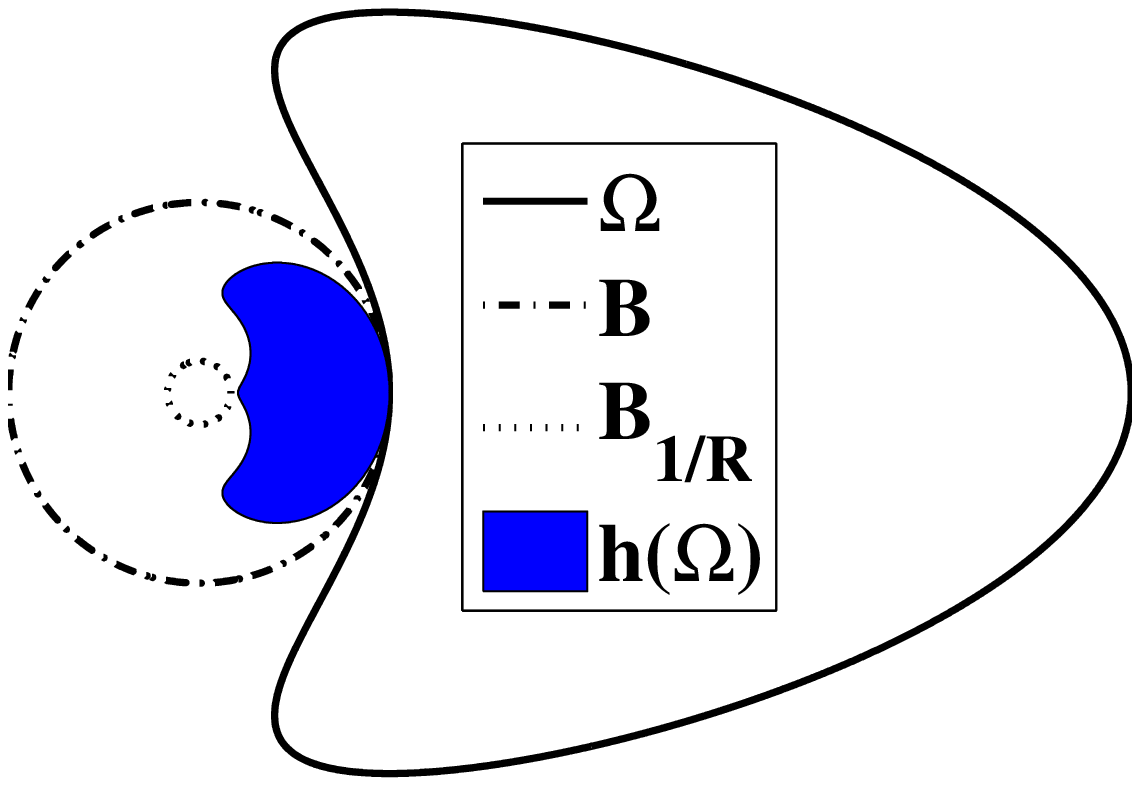} &
\includegraphics[width=2cm]{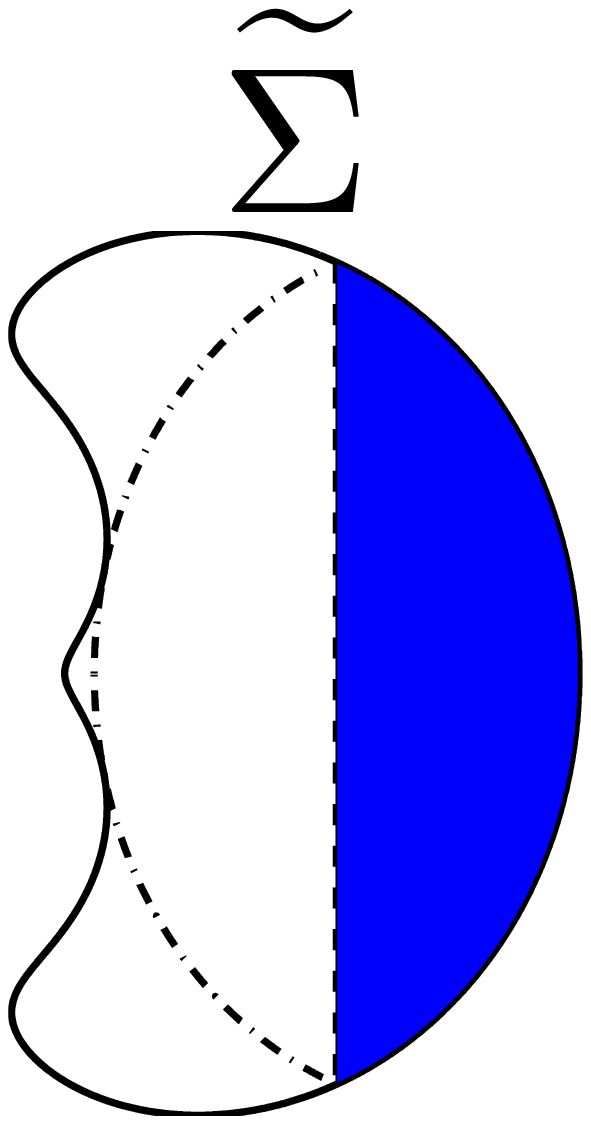} &
\includegraphics[width=6cm]{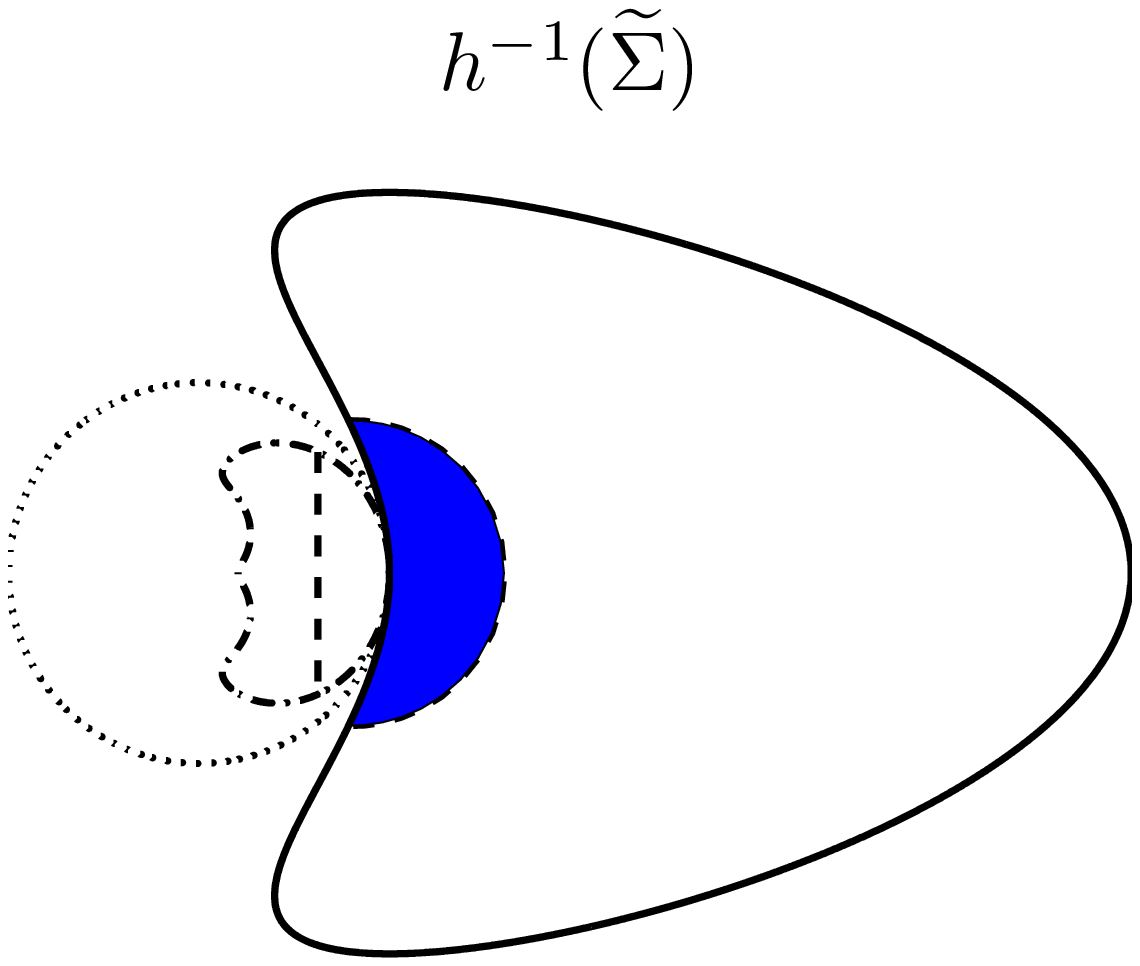} \\
 {\rm (a)} & {\rm (b)} & {\rm (c)}
\end{array}
$$
\label{fig3_4_5}
\caption{
(a) The exterior tangent ball and the inversion of the boundary into the unit ball.
(b) A maximal cap $\widetilde{\Sigma}$ in the transformed domain $h(\Omega )$.
(c) The set $h^{-1}(\widetilde{\Sigma})$ (i.e. the inverse image of the maximal cap $\widetilde{\Sigma}$) in the original domain $\Omega $.}
\end{figure}

Let $u$ solve \eqref{eq:elliptic:problem}. The  {\it Kelvin transform} of $u$ at the point $x_0\in\p\Om$ is defined in the transformed domain $\widetilde{\Om}:=h(\Om)$ by
\begin{small}
\begin{equation}\label{v}
v(y):=\left(\frac{1}{|y|}\right)^{N-2}\ u\left(\frac{y}{|y|^2}\right),\qq{for}y\in\widetilde{\Om}.
\end{equation}
\end{small}
We first prove that, for each point $x_0\in\p\Om,$ there exists some $\delta>0$ depending only on  the geometry of $\Om,$ (independent of $f$ and $u$), such that  its Kelvin transform has no stationary point in  $B_\de(x_0)\cap h(\Om)$, see Theorem \ref{th:kelvin}.

\smallskip

Retrieving the solution $u$ of \eqref{eq:elliptic:problem} we obtain that
$$
\max_{\Om} u\leq C \ \max_{\Om\setminus B_{\de'}(x_0) } u
$$
where  $C$ only depends on $\Om$ and it is independent of $f$ and $u$, see Theorem \ref{th:bdd:int}.

Next, we move $x_0\in\p\Om$ obtaining \eqref{ineq:Om:de},
see Theorem \ref{th:comp}.

\medskip

This paper is organized in the following way. In Section \ref{sec:mov:pl} we describe the moving planes method, and its consequences when applied to the Kelvin transform of the solution. In particular Theorem \ref{th:kelvin}, Theorem \ref{th:bdd:int}, and Theorem \ref{th:comp} are included in this section.
In Section \ref{sec:apriori} we prove our main results on a-priori bounds, see Theorem \ref{th:apriori}.
We include an Appendix with geometrical results on the local convexity of the inverted image of the domain, see Lemma \ref{lem:convex}.

\section{The moving planes method and the Kelvin transform}
\label{sec:mov:pl}

We first collect some well known results on the moving planes method:  Theorem \ref{th:moving:pl}  and Theorem \ref{th:moving:pl:nonl}. Next, we state our main results in this section: Theorem \ref{th:kelvin}, Theorem \ref{th:bdd:int}, and Theorem \ref{th:comp}.


We next expose the moving planes method.
We will be moving planes in the $x_1$-direction to fix ideas. Let us first define some  concepts and notations.
\begin{trivlist}
\item[-] The {\it moving plane} is defined in the following way: $ T_\lambda := \{ x\in \mathbb{R}^N : x_1 = \lambda \} ,$

\item[-] the {\it cap}:  $\quad \Sigma_\lambda := \{ x=(x_1,x')\in \mathbb{R}\times\mathbb{R}^{N-1}\cap \Omega \ :\ x_1 < \lambda \} ,$

\item[-] the {\it reflected point}: $\quad x^\lambda := (2\lambda - x_1,x') ,$

\item[-] the {\it reflected cap}: $\quad \Sigma'_\lambda := \{x^\lambda \ :\ x\in\Sigma_\lambda \},$   see fig. \ref{fig1_2}(a).

\item[-] the minimum value for $\lambda$ or starting value: $\quad \lambda_0 := \min \{ x_1\ :\ x\in \Omega \},$

\item[-] the maximum value for $\lambda$: $\quad \lambda^\star  := \max \{ \lambda\ :\ \Sigma'_{\mu} \subset \Omega \qq{for all} \mu\leq\lambda \},$

\item[-] the {\it maximal cap}: $\quad \Sigma:=\Sigma_{\lambda^\star  }.$
\end{trivlist}
\begin{figure}[tb]
$$\kern -2em
\begin{array}{ccc}
\includegraphics[width=4.5cm]{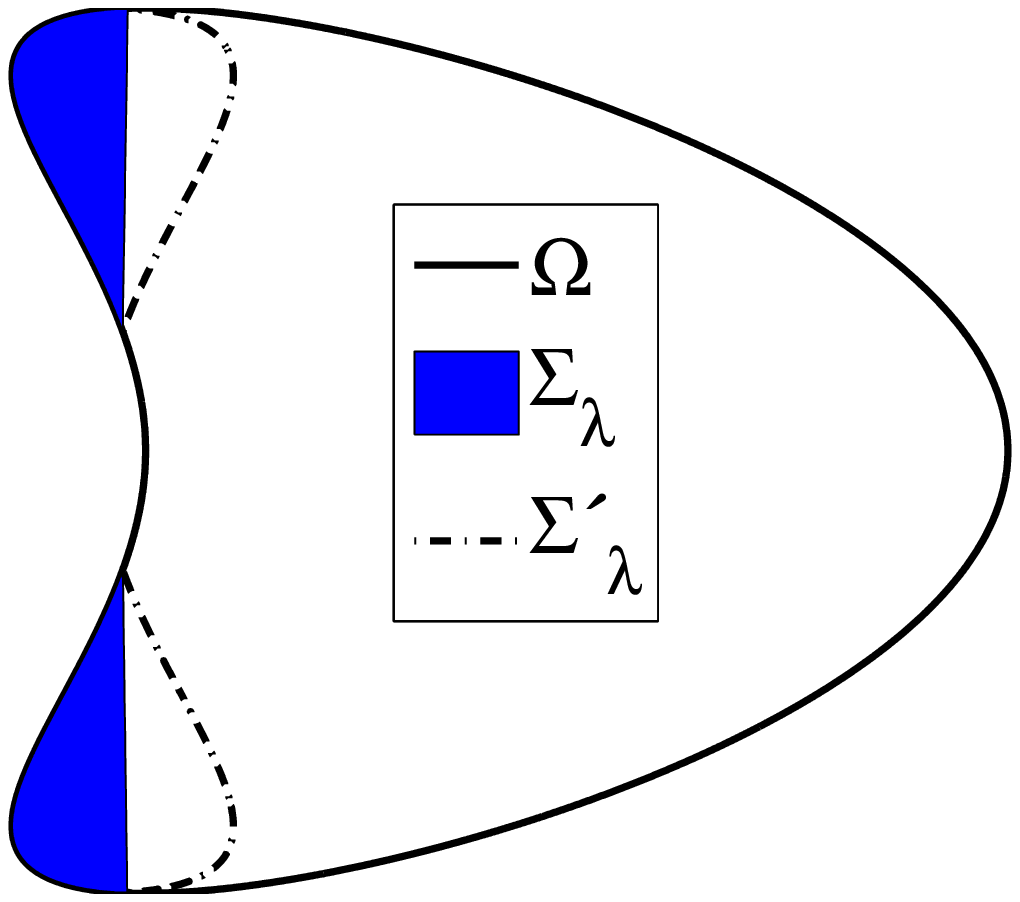} &
\includegraphics[width=4.5cm]{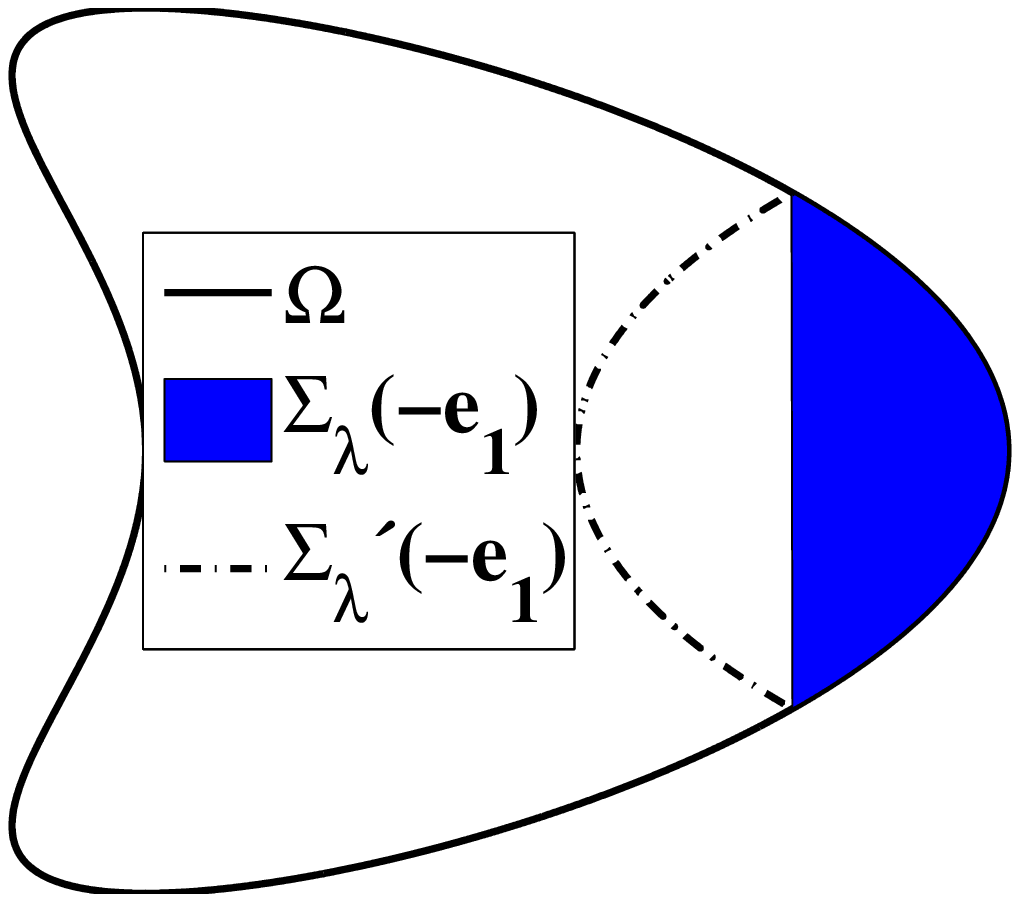} &
\includegraphics[width=4.5cm]{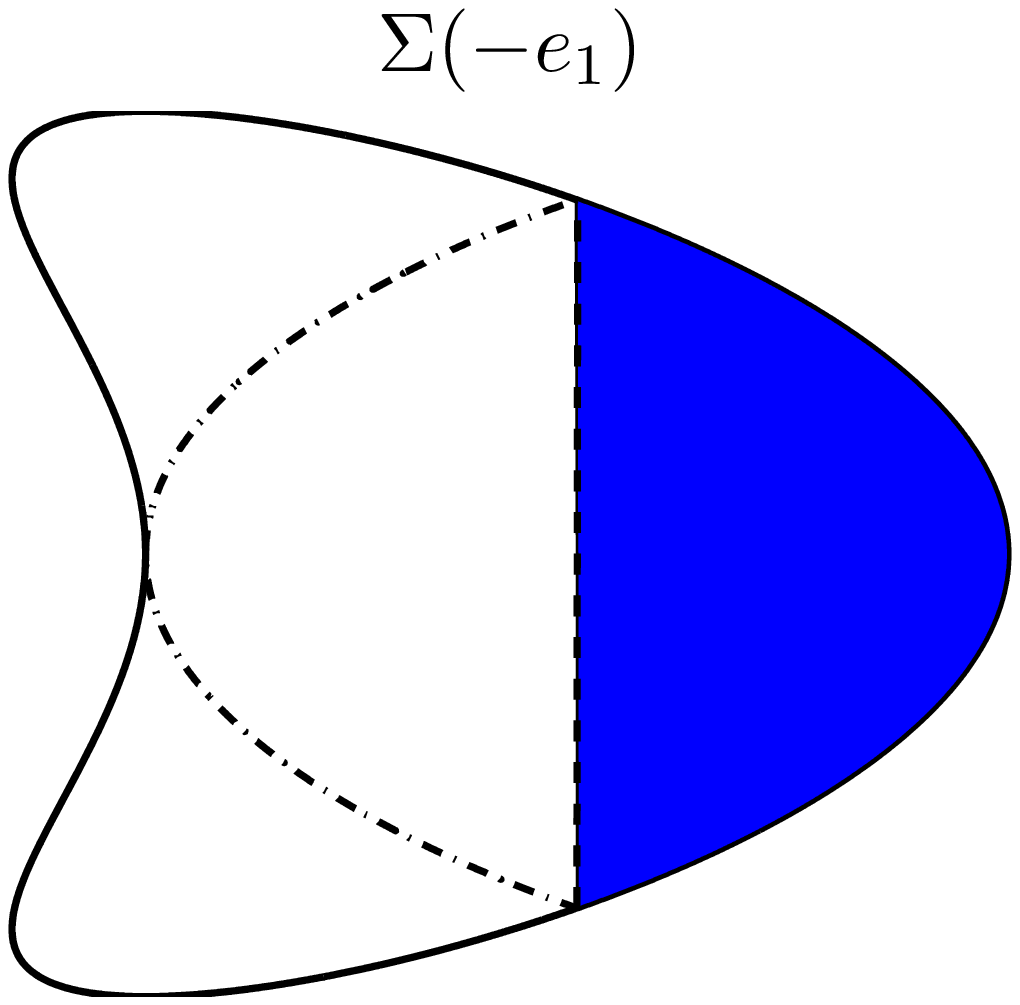} \\
{\rm (a)} & {\rm (b)} & {\rm (c)}
\end{array}
$$
\label{fig1_2}
\caption{(a) A  cap $\Sigma_\la$ and its reflected cap $\Sigma'_\la$  in the $e_1$ direction.
(b) A  cap $\Sigma_\la(-e_1)$ and its reflected cap $\Sigma'_\la(-e_1)$ (in the $-e_1$ direction).
(c) A maximal cap $\Sigma(-e_1)$.}
\end{figure}

The following Theorem is Theorem 2.1 in  \cite{Gidas-Ni-Nirenberg}. \begin{thm}\label{th:moving:pl}
Assume that $f$ is locally Lipschitz, that $\Omega$ is bounded and
that $T_\lambda,$ $x^\lambda,$ $\lambda_0,$ $\lambda^\star ,$  $\Sigma_\lambda $ $\Sigma'_\lambda ,$ and $\Sigma$ are as above. If $u\in C^2(\overline{\Om})$ satisfies  \eqref{eq:elliptic:problem} and $u>0$ in $\Om$, then for any $\lambda\in (\lambda_0,\lambda^\star )$
$$u (x) < u(x^\lambda)  \qq{and} \frac{\p u\ \ }{\p x_1}(x) > 0 \qq{for all} x\in\Sigma_\lambda .$$
Furthermore, if $\ \frac{\p u\ \ }{\p x_1}(x) = 0$ at some point in $\Om \cap T_{\lambda^\star },$
then necessarily $u$ is symmetric  in the plane $T_{\lambda^\star },$ and $\Om=\Sigma\cup \Sigma'\cup (T_{\lambda^\star } \cap\Om).$
\end{thm}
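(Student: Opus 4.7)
The plan is to follow the classical Gidas--Ni--Nirenberg moving planes argument, comparing $u$ with its reflection across the hyperplane $T_\lambda$. Set
$$w_\lambda(x) := u(x^\lambda) - u(x) \qquad \text{for } x \in \Sigma_\lambda.$$
Because both $\Delta$ and the right-hand side $f(u)$ are invariant under the reflection $x \mapsto x^\lambda$, the difference satisfies the linear elliptic equation
$$-\Delta w_\lambda \;=\; f(u(x^\lambda)) - f(u(x)) \;=\; c_\lambda(x)\, w_\lambda(x) \quad \text{in } \Sigma_\lambda,$$
with $c_\lambda \in L^\infty(\Sigma_\lambda)$ by the local Lipschitz hypothesis on $f$. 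On $T_\lambda \cap \Omega$ one has $w_\lambda \equiv 0$, and on the remainder of $\partial \Sigma_\lambda \subset \partial \Omega$ one has $w_\lambda(x) = u(x^\lambda) \geq 0$, since by the definition of $\lambda^\star$ the reflected point $x^\lambda$ lies in $\overline{\Omega}$ for every $\lambda \in (\lambda_0, \lambda^\star)$.

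The core of the proof is a continuity method in $\lambda$. For $\lambda$ slightly larger than $\lambda_0$ the cap $\Sigma_\lambda$ is a thin slab, so a narrow-domain maximum principle (via either the ABP estimate or a barrier built from a principal eigenfunction on slabs, which dominates $\|c_\lambda\|_\infty$ as the width shrinks) forces $w_\lambda \geq 0$, and the strong maximum principle upgrades this to $w_\lambda > 0$ in $\Sigma_\lambda$. Define
$$\Lambda \;:=\; \sup\bigl\{\lambda \in (\lambda_0, \lambda^\star] \,:\, w_\mu > 0 \text{ in } \Sigma_\mu \text{ for every } \mu \in (\lambda_0, \lambda)\bigr\},$$
and suppose for contradiction that $\Lambda < \lambda^\star$. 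Passing to the limit gives $w_\Lambda \geq 0$ in $\Sigma_\Lambda$; since $\Sigma'_\Lambda \subset\subset \Omega$ whenever $\Lambda < \lambda^\star$, the boundary data of $w_\Lambda$ are strictly positive on at least part of $\partial \Sigma_\Lambda \cap \partial \Omega$, so the strong maximum principle yields $w_\Lambda > 0$ in $\Sigma_\Lambda$. A compactness argument combined with Hopf's lemma on the flat face $T_\Lambda$ and on the curved part of $\partial \Sigma_\Lambda$ then lets one increase $\lambda$ slightly past $\Lambda$ while retaining $w_\lambda > 0$, contradicting the definition of $\Lambda$. This establishes $w_\lambda > 0$ in $\Sigma_\lambda$ for every $\lambda \in (\lambda_0, \lambda^\star)$.

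For the gradient statement, fix $x_0 = (x_1^0, x') \in \Sigma_\lambda$ and apply the above to $\mu := x_1^0 < \lambda^\star$: since $w_\mu > 0$ in $\Sigma_\mu$ and $w_\mu \equiv 0$ on $T_\mu \cap \Omega$, Hopf's lemma on the smooth boundary piece $T_\mu$ yields $\partial w_\mu / \partial x_1(x_0) < 0$, and the chain rule at $x_0 \in T_\mu$ gives $\partial w_\mu / \partial x_1(x_0) = -2\,\partial u / \partial x_1(x_0)$, whence $\partial u / \partial x_1(x_0) > 0$. For the rigidity at $\lambda = \lambda^\star$, continuity gives $w_{\lambda^\star} \geq 0$; if $\partial u / \partial x_1$ vanishes at some $y_0 \in \Omega \cap T_{\lambda^\star}$, the same Hopf calculation is incompatible with $w_{\lambda^\star} > 0$, so the strong maximum principle forces $w_{\lambda^\star} \equiv 0$ in $\Sigma_{\lambda^\star}$, which is precisely the asserted symmetry and forces $\Omega = \Sigma \cup \Sigma' \cup (T_{\lambda^\star} \cap \Omega)$. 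The main technical obstacle is the push-past-$\Lambda$ step, because reflected boundary points may themselves sit on $\partial \Omega$ where Hopf does not directly apply; one circumvents this by invoking the $C^2$ regularity of $\partial \Omega$ to see that such degenerate points form a small set where $w_\Lambda$ still has enough room to stay strictly positive under small perturbations of $\lambda$.
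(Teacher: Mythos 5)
The paper itself gives no proof: it simply cites Gidas--Ni--Nirenberg (Theorem~2.1 and Remark~1, p.~219) for the $C^1$ and Lipschitz cases respectively. Your proposal reconstructs the GNN moving-planes argument, and the overall architecture is right: define the reflected difference $w_\lambda$, observe that it solves a linear equation with a bounded zero-order coefficient obtained from the difference quotient of $f$, run the three-step scheme (starting step on a thin slab, continuation/push-past, rigidity at $\lambda^\star$), and extract the gradient inequality via Hopf's lemma on the flat face $T_\mu$ together with the chain-rule identity $\partial w_\mu/\partial x_1 = -2\,\partial u/\partial x_1$ on $T_\mu$. Two remarks. First, for the starting step you invoke a narrow-domain maximum principle in the Berestycki--Nirenberg style; that is a clean modern alternative to GNN's original initialization, which instead applies Hopf's lemma at the first tangency point on $T_{\lambda_0}$ to obtain $\partial u/\partial x_1>0$ in a neighborhood and deduces positivity of $w_\lambda$ for $\lambda$ near $\lambda_0$. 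Both routes are legitimate.

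The genuine gap is in the push-past step, exactly at the point you flag. The obstacle is not merely that "reflected boundary points may sit on $\partial\Omega$"; it is the corner set $T_\Lambda\cap\partial\Omega$, where $\partial\Sigma_\Lambda$ fails to be $C^1$ and where $w_\Lambda$ vanishes to higher order, so that neither the strong maximum principle nor the usual Hopf lemma gives the strict-sign information you need to pass to $\lambda>\Lambda$. The standard repairs are Serrin's corner lemma (a second-order Hopf-type estimate valid at right-angle corners of the cap) or GNN's original argument, which exploits $C^2$ regularity of $u$ up to $\partial\Omega$, the strict positivity $\partial u/\partial x_1>0$ on $T_\Lambda\cap\Omega$, and a Taylor expansion of $w_\Lambda$ near the corners to exclude a sequence of interior negative minima of $w_{\lambda_k}$ accumulating on $T_\Lambda\cap\partial\Omega$ as $\lambda_k\downarrow\Lambda$. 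Your sentence that "such degenerate points form a small set where $w_\Lambda$ still has enough room" asserts the conclusion without supplying either of these mechanisms, so as written the proof would not withstand a demand for the corner estimate. The rigidity paragraph and the deduction $\Omega=\Sigma\cup\Sigma'\cup(T_{\lambda^\star}\cap\Omega)$ are fine in outline, though the last equality deserves the extra sentence that $w_{\lambda^\star}\equiv 0$ forces $u$ (extended by zero) to be even across $T_{\lambda^\star}$, hence so is its positivity set $\Omega$.
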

\begin{proof}
See  \cite[Theorem 2.1 and Remark 1, p.219]{Gidas-Ni-Nirenberg} for $f\in C^1$
and   locally  Lipschitzian respectively.
\end{proof}

\begin{rem}{\rm
Set $x_0\in \p\Om\cap T_{\lambda_0},$ see fig. \ref{fig1_2}(a). Let us observe that by definition of  $\lambda_0,$ $T_{\lambda_0}$ is the tangent plane to the graph of the boundary at $x_0$, and  the inward normal at $x_0,$  is $n_i(x_0)=e_1.$  The above Theorem says that  the partial derivative following the direction given by the inward normal at the tangency point is strictly positive in the whole maximal cap. Consequently, {\it there are no critical points in the maximal cap}}.
\end{rem}

Now, we apply the above Theorem in any direction. According to the above Theorem, any positive solution of \eqref{eq:elliptic:problem} satisfying {\rm (H1)} has no stationary point in any maximal cap moving planes in any direction. This is the statement of the following Corollary. First, let us fix the notation
for a general $\nu\in\mathbb{R}^N$ with $|\nu|=1.$ We set
\begin{trivlist}
\item[-] the {\it moving plane} defined as:  $\quad T_\lambda (\nu)= \{ x\in \mathbb{R}^N\ :\ x\cdot \nu = \lambda \} ,$
\item[-] the {\it cap}:     $\quad \Sigma_\lambda (\nu)= \{ x\in \Omega \ :\ x\cdot \nu < \lambda \} ,$
\item[-] the {\it reflected point}:
    $\quad x^\lambda (\nu)= x+2(\lambda- x\cdot \nu)\nu ,$
\item[-] the {\it reflected cap}:
    $\quad \Sigma'_\lambda (\nu)= \{x^\lambda \ :\ x\in\Sigma_\lambda (\nu)\},$ see fig. \ref{fig1_2}(b), for $\nu=-e_1,$
\item[-] the minimum value of $\lambda$:
    $\quad    \lambda_0 (\nu)= \min \{ x\cdot \nu \ :\ x\in \Omega \},$
\item[-] the maximum value of $\lambda$:
$\ \ \lambda^\star  (\nu)= \max \{ \lambda\, :\, \Sigma'_{\mu}(\nu) \subset \Omega \ \ \mbox{for all } \mu\leq\lambda \},$
\item[-] and the {\it maximal cap}: $\quad \Sigma(\nu)=\Sigma_{\lambda^\star  (\nu)}(\nu),$
see fig. \ref{fig1_2}(c), for $\nu=-e_1.$
\end{trivlist}
Finally, let us also define the {\it optimal cap set}
\begin{small}\begin{equation}\label{opt:caps}
\Om^\bigstar    = \bigcup_{\{ \nu\in\R^N,|\nu|=1\}} \Sigma(\nu).
\end{equation}\end{small}

Applying Theorem \ref{th:moving:pl} in any direction, we can assert that there are not critical points in the union of all the maximal caps following any direction. The set $\Om^\bigstar$ is the union of the maximal caps in any direction, and in particular, the maximum of a positive solution is attained in the complement of $\Om^\bigstar.$ Thus we have:

\begin{cor}\label{co}
Assume that $f$ is locally Lipschitzian, that $\Omega$ is bounded, and that $\Om^\bigstar$ is the optimal cap set defined as above.

If  $u\in C^2(\overline{\Om})$ satisfies  \eqref{eq:elliptic:problem} and $u>0$ in $\Om$ then
$$\max_{\overline{\Om}} u=\max_{\overline{\Om} \setminus \Om^\bigstar } u.
$$
\end{cor}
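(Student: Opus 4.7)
The inequality $\max_{\overline{\Om}} u \geq \max_{\overline{\Om}\setminus \Om^\bigstar} u$ is immediate since $\overline{\Om}\setminus\Om^\bigstar \subseteq \overline{\Om}$. The plan is to prove the reverse inequality by showing that any point in $\Om^\bigstar$ fails to be a maximizer of $u$ on $\overline{\Om}$.

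First I would locate the maximum. Since $u\in C^2(\overline{\Om})$, the max is attained at some $x^\star\in\overline{\Om}$. Because $u=0$ on $\partial\Om$ while $u>0$ somewhere in $\Om$, the maximum value is strictly positive, so $x^\star\in\Om$. I would also note that by definition each maximal cap $\Sigma(\nu)\subset\Om$, so $\Om^\bigstar\subset\Om$ and the question reduces to an interior one.

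The heart of the argument is to suppose, for contradiction, that $x^\star\in\Om^\bigstar$. Then, by \eqref{opt:caps}, there exists a unit vector $\nu\in\R^N$ with $x^\star\in\Sigma(\nu)=\Sigma_{\lambda^\star(\nu)}(\nu)$, i.e.\ $x^\star\cdot\nu<\lambda^\star(\nu)$. Pick any $\la$ with $x^\star\cdot\nu<\la<\la^\star(\nu)$; then $x^\star\in\Sigma_\la(\nu)$, which is an interior point of the maximal cap. Applying Theorem \ref{th:moving:pl} in the direction $\nu$ (i.e.\ after the obvious rotation aligning $\nu$ with $e_1$), we obtain
$$\nabla u(x^\star)\cdot\nu \;=\; \frac{\partial u}{\partial \nu}(x^\star) \;>\; 0.$$
But $x^\star$ is an interior maximizer of $u$, so $\nabla u(x^\star)=0$, a contradiction. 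Hence $x^\star\in\overline{\Om}\setminus\Om^\bigstar$, and consequently
$$\max_{\overline{\Om}} u \;=\; u(x^\star) \;\leq\; \max_{\overline{\Om}\setminus\Om^\bigstar} u,$$
completing the proof.

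There is essentially no technical obstacle here: the work has already been done in Theorem \ref{th:moving:pl}. The only care needed is to verify that $\Om^\bigstar$ lies in the interior of $\Om$ (so that being a maximizer forces $\nabla u=0$) and that each point of $\Om^\bigstar$ really sits \emph{strictly inside} some maximal cap $\Sigma_\la(\nu)$ with $\la<\la^\star(\nu)$, which is exactly the hypothesis under which Theorem \ref{th:moving:pl} yields the strict inequality $\partial u/\partial\nu>0$.
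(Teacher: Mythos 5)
Your proof is correct and follows essentially the same approach as the paper, which merely states the argument informally in the paragraph preceding the corollary (no separate proof environment is given). You have spelled out the details—interior location of the maximizer, the strict inequality $\partial u/\partial\nu>0$ from Theorem \ref{th:moving:pl} applied in the direction $\nu$, and the resulting contradiction with $\nabla u(x^\star)=0$—more carefully than the paper does, but the underlying idea is identical.
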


\smallskip

If $\Om^\bigstar $ is a full boundary neighborhood of $\p\Om$ in $\overline{\Om},$ as it happens in convex domains, then there is $\e>0$ depending only on the geometry of $\Om$ (independent of $f$ and $u$) such that $u$ has no stationary points in a $\e$-neighborhood of $\p\Om.$
Next we study the case in which $\Om^\bigstar $ is not a  neighborhood of $\p\Om$ in $\Om.$

\smallskip

We prove  that the maximum of $u$ in the whole domain $\Om$ can be bounded above by a constant multiplied by the maximum of $u$ in some open set strongly contained in $\Om,$  see Theorem \ref{th:comp} below.

\smallskip

To achieve this result,  we will need the moving plane method for a nonlinearity $f=f(x,u)$. Next we study this method on nonlinear equations in a more general setting. Let us consider the nonlinear equation
\begin{equation}
\label{eq:elliptic:problem:nonl}
F\left(x,u,\nabla u,\left(\p^2_{ij}u\right)_{i,j=1,\cdots,N}\right)=0,
\end{equation}
where $F:\Om\times \mathbb{R}\times \mathbb{R}^N\times \mathbb{R}^{N\times N}$ is a real function, $F=F(x,s,p,r)$ and $\p^2_{ij}u=\frac{\p^2 u}{\p x_{i}\p x_j}.$ The operator $F$ is assumed to be elliptic, i.e. for positive constants $m,\ M$
\begin{equation*}
M|\xi|^2\geq  \sum_{i,j}\frac{\p F}{\p r_{ij}}\xi_i\xi_j\geq m|\xi|^2,\qquad \forall \xi\in\mathbb{R}^N.
\end{equation*}

On the function $F$  we will  assume
\begin{itemize}
  \item[\rm (F1)] $F$  is continuous and differentiable with respect to the  variables $s,p_i,r_{i,j},$ for all values of its arguments $(x,s,p,r)\in\overline{\Om}\times \mathbb{R}\times \mathbb{R}^N\times \mathbb{R}^{N\times N}.$
  \item[\rm (F2)] For all $x\in\p\Om \cap \{x_1<\lambda^\star \},$ $F(x,0,0,0)$ satisfies either
  $$F(x,0,0,0)\geq 0 \qq{or} F(x,0,0,0)< 0  .$$
  \item[\rm (F3)] $F$ satisfies
  $$F\left(x^\lambda,s,(-p_1,p'),\hat{r}\right)\geq F(x,s,p,r),
  $$
  for all $\lambda\in [\lambda_0,\lambda^\star ),\ x\in\Sigma(\lambda)$ and $(s,p,r)\in \mathbb{R}\times \mathbb{R}^N\times \mathbb{R}^{N\times N}$ with $s>0$ and $p_1<0,$ where $p=(p_1,p')\in \mathbb{R}\times \mathbb{R}^{N-1},$ $\hat{r}=\begin{small}\left(
                          \begin{array}{cc}
                            r_{11} & -r_{1\cdot}' \\
                            r_{21} & r_{2\cdot}' \\
                            \vdots & \vdots \\
                            r_{N1} & r_{N\cdot}' \\
                          \end{array}\right)\end{small}
                          ,$ and $r_{i\cdot}':=(r_{i2},\cdots,r_{iN}),$ for $i=1,\cdots,N.$
\end{itemize}

The following theorem is Theorem 2.1' in \cite{Gidas-Ni-Nirenberg}.
\begin{thm}\label{th:moving:pl:nonl}
Assume that  $\Omega$ is bounded and
that $T_\lambda,$ $x^\lambda,$ $\lambda_0,$ $\lambda^\star ,$  $\Sigma_\lambda $ $\Sigma'_\lambda ,$ and $\Sigma$ are as above.
Let $F$ satisfies conditions {\rm (F1)}, {\rm (F2)} and {\rm (F3)}.

If $u\in C^2(\overline{\Om})$ satisfies  \eqref{eq:elliptic:problem:nonl} and $u>0$ in $\Om$, then for any $\lambda\in (\lambda_0,\lambda^\star )$
$$u (x) < u(x^\lambda)  \qq{and} \frac{\p u\ \ }{\p x_1}(x) > 0 \qq{for all} x\in\Sigma_\lambda .$$
Furthermore, if
$\ \frac{\p u\ \ }{\p x_1}(x) = 0$ at some point in $\Om \cap T_{\lambda^\star },$
then necessarily $u$ is symmetric  in the plane $T_{\lambda^\star },$ and $\Om=\Sigma\cup \Sigma'\cup (T_{\lambda^\star } \cap\Om).$
\end{thm}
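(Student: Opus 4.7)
The plan is to adapt the classical moving-plane scheme of Gidas-Ni-Nirenberg to the fully nonlinear operator $F$ by comparing $u$ with its reflection in $T_\lambda$. Set $v_\lambda(x) := u(x^\lambda)$ for $x\in \Sigma_\lambda$ and $w_\lambda(x) := v_\lambda(x) - u(x)$. The aim is to prove $w_\lambda > 0$ in $\Sigma_\lambda$ for every $\lambda \in (\lambda_0, \lambda^\star)$, and then read off $\partial u/\partial x_1 > 0$ by applying Hopf's lemma at $T_\lambda$.

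First I would derive a linear elliptic differential inequality for $w_\lambda$. Reflection in $T_\lambda$ negates $\partial_1 u$ and the off-diagonal entries of $D^2 u$ involving index $1$, so the equation $F=0$ at $x^\lambda$ becomes
\begin{equation*}
F\bigl(x^\lambda,\,v_\lambda(x),\,(-\partial_1 v_\lambda,\nabla' v_\lambda),\,\widehat{D^2 v_\lambda(x)}\bigr) = 0,\qquad x\in\Sigma_\lambda.
\end{equation*}
Provisionally assuming $v_\lambda > 0$ and $\partial_1 v_\lambda < 0$, hypothesis (F3) gives
\begin{equation*}
F(x,v_\lambda,\nabla v_\lambda,D^2 v_\lambda)\le 0 = F(x,u,\nabla u,D^2 u);
\end{equation*}
subtracting and applying the mean value theorem via (F1) produces bounded measurable coefficients with
\begin{equation*}
a^{ij}(x)\partial_{ij} w_\lambda + b^i(x)\partial_i w_\lambda + c(x) w_\lambda \le 0 \quad \text{in } \Sigma_\lambda,
\end{equation*}
where $(a^{ij})$ is uniformly elliptic by the ellipticity assumption on $F$. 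On $T_\lambda \cap \Omega$ one has $w_\lambda \equiv 0$; on $\partial\Omega\cap\partial\Sigma_\lambda$, since $x^\lambda \in \Omega$ for $\lambda<\lambda^\star$, positivity of $u$ together with (F2) gives $w_\lambda \ge 0$.

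Next the moving plane proceeds by continuity: define
\begin{equation*}
\Lambda := \{\lambda \in (\lambda_0,\lambda^\star) : w_\mu > 0 \text{ in } \Sigma_\mu \text{ for all } \mu\le\lambda\}
\end{equation*}
and show $\Lambda = (\lambda_0,\lambda^\star)$. Nonemptiness follows from a narrow-domain maximum principle (or an Aleksandrov-Bakelman-Pucci estimate) which absorbs the sign of $c$ on the thin caps obtained for $\lambda$ close to $\lambda_0$. Closedness follows by taking limits and invoking the strong maximum principle, upgrading $w_{\bar\lambda}\ge 0$ to either $w_{\bar\lambda}>0$ in $\Sigma_{\bar\lambda}$ or $w_{\bar\lambda}\equiv 0$ (the rigidity alternative). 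Openness uses the Hopf lemma at $T_{\bar\lambda}$, which produces $\partial_1 u > 0$ on $T_{\bar\lambda}\cap\Omega$ and, combined with a narrow-strip argument in a neighborhood of $T_{\bar\lambda}$, lets one push past $\bar\lambda$. The pointwise monotonicity $\partial_1 u(x) > 0$ for $x\in \Sigma_\lambda$ is read off from $w_\mu > 0$ by applying Hopf's lemma at each $T_\mu$ with $\mu\le\lambda$. For the second assertion, if $\partial_1 u(x_0)=0$ at $x_0\in\Omega\cap T_{\lambda^\star}$ then $w_{\lambda^\star}$ attains an interior zero minimum with $\partial_1 w_{\lambda^\star}(x_0)=0$, and the Hopf lemma forces $w_{\lambda^\star}\equiv 0$ on the connected component of $\Sigma_{\lambda^\star}$ containing $x_0$; combined with the definition of $\lambda^\star$ this gives the asserted symmetry $\Omega = \Sigma\cup\Sigma'\cup(T_{\lambda^\star}\cap\Omega)$.

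The main obstacle is that hypothesis (F3) is only applicable under $v_\lambda > 0$ and $\partial_1 v_\lambda < 0$, the latter being equivalent to $\partial_1 u > 0$ at the reflected point $x^\lambda$. This monotonicity is not immediate from the equation but becomes available precisely through the inductive structure of the continuation step. Organizing the argument so that the linear elliptic inequality and the strict monotonicity of $u$ in $x_1$ feed into each other without circularity—typically by propagating from $\lambda_0$ and exploiting the Hopf lemma at every intermediate $T_\mu$—is the technical heart of the proof.
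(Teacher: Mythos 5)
The paper does not prove Theorem~\ref{th:moving:pl:nonl} at all: it is stated verbatim as Theorem~2.1$'$ of Gidas--Ni--Nirenberg and the reader is sent to that paper. So the only meaningful comparison is to the classical GNN argument that the paper invokes, and your sketch is indeed a faithful outline of that argument: define $w_\lambda = u(\cdot^\lambda)-u$, transfer $F=0$ across the reflection using the structure of (F3), linearize via (F1) to obtain $a^{ij}\partial_{ij}w_\lambda + b^i\partial_i w_\lambda + cw_\lambda \le 0$, run the continuation from $\lambda_0$ using a narrow-domain maximum principle, and finish with the Hopf lemma at $T_\lambda$ and the rigidity alternative at $T_{\lambda^\star}$. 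The sign bookkeeping in your derivation of the differential inequality is correct.

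There is, however, a genuine gap in exactly the place you flag but do not close. Condition (F3) is conditional on $p_1<0$, so to use it with $(s,p,r)=(v_\lambda,\nabla v_\lambda,D^2v_\lambda)$ you must already know $\partial_1 v_\lambda(x)=-\partial_1 u(x^\lambda)<0$, i.e.\ $\partial_1 u>0$ on the \emph{reflected} cap $\Sigma'_\lambda$, which lies on the opposite side of $T_\lambda$. The continuation set $\Lambda$ you define only furnishes monotonicity of $u$ on $\Sigma_\mu$, $\mu\le\lambda$, not on $\Sigma'_\lambda$, and the Hopf lemma at $T_\mu$ only produces $\partial_1 u>0$ on $T_\mu\cap\Omega$, not strictly beyond it. So ``openness'' as stated does not actually propagate. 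In Gidas--Ni--Nirenberg this is handled by a careful interplay with their corner lemma (Lemma~S) and by tracking the monotonicity region on both sides of the moving plane simultaneously as $\lambda$ increases; simply saying that the Hopf lemma at intermediate planes resolves the circularity is not sufficient. One also should note that your appeal to (F2) at $\partial\Omega\cap\partial\Sigma_\lambda$ is misplaced: the strict inequality $w_\lambda>0$ there follows directly from $u=0$ and $u(x^\lambda)>0$ for $x^\lambda\in\Omega$; (F2) is used in GNN to invoke the corner/Hopf lemma at boundary points where $T_\lambda$ meets $\partial\Omega$, not to obtain the boundary sign of $w_\lambda$. Finally, it is worth remarking that in the only application made in this paper (Corollary~\ref{co2}, $F(x,s,p,r)=\operatorname{tr} r + f(x,s)$), the operator does not depend on $p$, so the restriction $p_1<0$ in (F3) is vacuous and the circularity you identified never arises there.
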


As an immediate Corollary in the semilinear situation we have the following one.
\begin{cor}\label{co2}
Suppose $u\in C^2(\overline{\Om})$ is a positive solution of
\begin{equation}\label{eq:f:x:u}
-\Delta u=f(x,u),  \quad \mbox{in } \Omega, \qquad u= 0,  \quad \mbox{on } \partial \Omega.
\end{equation}
Assume $f=f(x,s)$  and its first derivative $f_s$ are continuous,  for $(x,s)\in \overline{\Om}\times\R.$

Assume that
\begin{equation}\label{f:x:u}
    f(x^\lambda,s)\geq f(x,s)\qq{for all} x\in\Sigma(\lambda^\star ),\qq{for all} s>0.
\end{equation}

Then for any $\lambda\in (\lambda_0,\lambda^\star )$
$$u (x) < u(x^\lambda)  \qq{and} \frac{\p u\ \ }{\p x_1}(x) > 0 \qq{for all} x\in\Sigma_\lambda .$$
Furthermore, if $\ \frac{\p u\ \ }{\p x_1}(x) = 0$ at some point in $\Om \cap T_{\lambda^\star },$
then necessarily $u$ is symmetric  in the plane $T_{\lambda^\star },$ and $\Om=\Sigma\cup \Sigma'\cup (T_{\lambda^\star } \cap\Om).$
\end{cor}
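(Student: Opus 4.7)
The plan is to regard the semilinear problem \eqref{eq:f:x:u} as an instance of the general nonlinear problem \eqref{eq:elliptic:problem:nonl} with
$$F(x,s,p,r) \;=\; \sum_{i=1}^N r_{ii} + f(x,s),$$
and then to invoke Theorem \ref{th:moving:pl:nonl}. The entire argument reduces to checking that this $F$ satisfies conditions (F1), (F2), and (F3). Ellipticity holds with $m=M=1$ since $\p F/\p r_{ij} = \de_{ij}$, and (F1) follows immediately from the hypothesis that $f$ and $f_s$ are continuous on $\overline{\Om}\times\R$: our $F$ is affine in $r$, independent of $p$, continuous in $x$, and $C^1$ in $s$.

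The essential step is verifying (F3). Here I would observe that $F$ does not depend on $p$, so the sign change $p_1\mapsto -p_1$ is irrelevant, and that by inspection of the defining block form, $\hat r$ preserves diagonal entries; thus $\sum_i \hat r_{ii} = \sum_i r_{ii}$. Consequently
$$F\bigl(x^\lambda, s, (-p_1,p'), \hat r\bigr) - F(x, s, p, r) \;=\; f(x^\lambda, s) - f(x, s),$$
which is non-negative for $x \in \Sigma(\lambda^\star)$ and $s>0$ directly by the monotonicity assumption \eqref{f:x:u}. Condition (F2) becomes a sign statement on $f(\cdot,0)$ over $\p\Om \cap \{x_1<\lambda^\star\}$, which is the usual technical hypothesis implicit in the Gidas--Ni--Nirenberg framework and poses no obstruction here.

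With (F1)--(F3) in hand, Theorem \ref{th:moving:pl:nonl} applies verbatim and yields the strict inequality $u(x)<u(x^\lambda)$ and the strict monotonicity $\p u/\p x_1(x)>0$ on $\Sigma_\la$ for every $\la \in (\la_0, \la^\star)$, together with the symmetry dichotomy when $\p u/\p x_1$ vanishes at an interior point of $T_{\lambda^\star}$. I do not anticipate any substantive obstacle: the corollary is a direct semilinear restatement of the fully nonlinear theorem, so the only real work is the bookkeeping exercise of translating \eqref{f:x:u} into the statement of (F3), which is transparent because $F$ is independent of $p$ and because the Laplacian sees only the trace of the Hessian, which is invariant under the partial reflection encoded by $\hat r$.
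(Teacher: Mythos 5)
Your proposal is correct and matches the paper's intended approach exactly: the paper labels the corollary as ``immediate'' from Theorem~\ref{th:moving:pl:nonl} and gives no further proof, and your reduction via $F(x,s,p,r)=\sum_i r_{ii}+f(x,s)$ together with the observation that $\hat r$ preserves the trace is precisely the verification being left implicit. One cosmetic remark: (F2) as stated in the paper is actually a tautology (every real number is either $\geq 0$ or $<0$), which is presumably a transcription artifact from the uniform-sign hypothesis in Gidas--Ni--Nirenberg; your phrasing treats it as a genuine sign condition on $f(\cdot,0)$, which is more restrictive than the literal statement but still poses no obstruction here, as you correctly note.
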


Next, we state the first of our main results in this section, fixing regions where the Kelvin transform of the solution has no critical points. This is the statement of the following Theorem.
Let us fix some notation.
For any $x_0\in\p\Om$, let $\tilde{n}_i(x_0)$ be the inward normal at $x_0$ in the transformed domain $\tilde{\Om}=h(\Om),$ where $h$ is defined in \eqref{def:h},
and let $\widetilde{\Sigma}= \widetilde{\Sigma} (\tilde{n}_i(x_0))$
be its maximal cap,  see fig. \ref{fig3_4_5}(b).

\begin{thm}\label{th:kelvin} Assume that $\Omega \subset \R ^N $ is a bounded  domain  with $C^{2}$  boundary. Assume that the nonlinearity $f$ satisfies {\rm (H1)} and {\rm (H2)}.

If $u\in C^2(\overline{\Om})$ satisfies  \eqref{eq:elliptic:problem} and $u>0$ in $\Om$, then for any $x_0\in\p\Om$ its maximal cap  in the transformed domain $\widetilde{\Sigma}$ is nonempty,
and its Kelvin transform $v,$ defined by \eqref{eq:kelvin}, has no critical point in the  maximal cap $\widetilde{\Sigma}.$

Consequently, for any $x_0\in\p\Om,$ there exists a $\delta>0$ only dependent of $\Om$ and $x_0,$ and independent of $f$ and $u$ such that  its Kelvin transform $v$ has no critical point in the set $B_\de(x_0)\cap h(\Om)$.
\end{thm}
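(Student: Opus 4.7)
The plan is to apply the nonlinear moving planes method of Corollary \ref{co2} to the Kelvin transform $v$ defined in \eqref{v}. After a translation and a scaling (which do not affect the conclusion, since $\delta$ is allowed to depend on $\Om$ and $x_0$), I may assume that the exterior tangent ball supplied by property (P) is the open unit ball $B$ centered at the origin and that $x_0 = e_1 = (1, 0, \ldots, 0)$. Then $\Om \cap B = \emptyset$, so $\widetilde{\Om} := h(\Om) \subset \overline{B}$ with $h(x_0) = x_0$; moreover, since $\p \Om$ is tangent to $\p B$ at $x_0$, the boundary $\p\widetilde{\Om}$ is tangent to $\p B$ at $x_0$ as well, giving $\tilde n_i(x_0) = -e_1$. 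Because $\Om$ is bounded, $\widetilde{\Om}$ stays uniformly away from the origin, and a direct computation of the Laplacian under inversion shows that
\begin{equation*}
-\Delta v(y) = g(y, v(y)) \quad \text{in } \widetilde{\Om}, \qquad g(y, s) := |y|^{-(N+2)}\, f\bigl(|y|^{N-2} s\bigr),
\end{equation*}
with $g$ continuous in $y$ and locally Lipschitz in $s$ on $\widetilde{\Om} \times \R$.

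I next analyze the caps in the direction $\nu = -e_1$. The starting value is $\lambda_0(-e_1) = -\max\{y_1 : y \in \widetilde{\Om}\} = -1$, attained only at $x_0$. Lemma \ref{lem:convex} of the appendix furnishes local convexity of $\widetilde{\Om}$ near $x_0$, which ensures that, for $\lambda$ slightly above $\lambda_0$, the reflection of the small cap $\Sigma_\lambda(-e_1)$ remains inside $\widetilde{\Om}$, so $\lambda^\star > \lambda_0$ and the maximal cap $\widetilde{\Sigma}$ is nonempty. Using the identity
\begin{equation*}
|y^\lambda|^2 - |y|^2 = 4\lambda\,(\lambda - y \cdot \nu) = 4\lambda\,(\lambda + y_1),
\end{equation*}
I also obtain $\lambda^\star \leq 0$: if $\lambda > 0$, then for $y \in \Sigma_\lambda(-e_1)$ close to $x_0$ one has $|y|$ close to $1$ and $\lambda + y_1 > 0$, so $|y^\lambda| > 1$ and $y^\lambda \notin \overline{B} \supset \widetilde{\Om}$, contradicting $\Sigma'_\lambda(-e_1) \subset \widetilde{\Om}$. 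Consequently $\widetilde{\Sigma} \subset \{y_1 > 0\}$.

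The remaining and main obstacle is to verify hypothesis \eqref{f:x:u} of Corollary \ref{co2} for $g$; this is exactly where (H1) enters. Rewriting
\begin{equation*}
g(y, s) = s^{N^\star}\, \phi\bigl(|y|^{N-2} s\bigr), \qquad \phi(t) := \dfrac{f(t)}{t^{N^\star}},
\end{equation*}
hypothesis (H1) says that $\phi$ is nonincreasing on $(0, \infty)$. For any $\lambda \in (\lambda_0, \lambda^\star]$ and any $y \in \Sigma_\lambda(-e_1)$, the identity displayed above, together with $\lambda + y_1 > 0$ in the cap and $\lambda \leq 0$, yields $|y^\lambda| \leq |y|$; hence $|y^\lambda|^{N-2} s \leq |y|^{N-2} s$ for $s > 0$, and the monotonicity of $\phi$ gives $g(y^\lambda, s) \geq g(y, s)$, which is precisely \eqref{f:x:u}. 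Corollary \ref{co2} therefore applies and yields strict monotonicity of $v$ in the $\nu$-direction throughout $\widetilde{\Sigma}$; in particular $\nabla v$ does not vanish on $\widetilde{\Sigma}$. Finally, since $x_0 \in \overline{\widetilde{\Sigma}}$ and $\widetilde{\Sigma}$ is relatively open in $\widetilde{\Om}$, a sufficiently small ball $B_\delta(x_0)$, with $\delta > 0$ depending only on $\Om$ and $x_0$, satisfies $B_\delta(x_0) \cap h(\Om) \subset \widetilde{\Sigma}$, which gives the last assertion.
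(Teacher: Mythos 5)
Your proof is correct and follows essentially the same route as the paper: it rests on the Kelvin transform computation, Lemma~\ref{lem:convex} for nonemptiness of $\widetilde{\Sigma}$, the reflection identity $|y^\lambda|^2-|y|^2=4\lambda(\lambda-y\cdot\nu)$ to get $|y^\lambda|\le|y|$, and Corollary~\ref{co2}. One small remark: you correctly attribute the monotonicity $g(y^\lambda,s)\ge g(y,s)$ to (H1) via $g(y,s)=s^{N^\star}\phi(|y|^{N-2}s)$, whereas the paper's text cites (H2) here, which is evidently a misprint; you also make explicit the argument that $\lambda^\star\le 0$, which the paper leaves as ``straightforward.''
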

\begin{proof}
Since  $\Om$ is a  $C^2$ domain, it  satisfies a uniform exterior sphere condition (P). Let $x_0\in \p\Om,$ and let $\overline{B}$ be the closure of a ball intersecting $\overline{\Om}$ only at the point $x_0.$
For convenience, by scaling, translating and rotating the axes, we may assume that $x_0=(1,0,\cdots,0),$ and $B$ is the unit ball with center at the origin.


We perform an inversion  $h$ from $\Om$  into the unit ball $B,$ by using the inversion map $x\to h(x)=\frac{x}{|x|^2}.$
Due to $\overline{B}\cap\overline{\Om}=\{x_0\}$, and to the boundedness of $\Om,$ there exists some $R>0,$ such that
\begin{equation}\label{bd:Om}
1\leq |x|\leq R \qq{for any} x\in\Om,
\end{equation}
and the image
\begin{equation}\label{Om:tilde}
\widetilde{\Om}=h(\Om)=\left\{y=h(x)\in\mathbb{R}^N\ :\ x=\frac{y}{|y|^2}\in\Om\right\} \subset B\setminus B_{1/R}.
\end{equation}
Note that $0\not\in h(\Om),$ see fig. \ref{fig3_4_5}(a).
Moreover $\widetilde{\Om}$ is strictly convex near $x_0$ and the maximal cap $\widetilde{\Sigma}= \widetilde{\Sigma} (\tilde{n}_i(x_0))$ contains a full neighborhood of $x_0$ in $\widetilde{\Om},$
where $\tilde{n}_i(x_0)$ is the normal inward at $x_0,$ see lemma \ref{lem:convex} in the Appendix,  see also fig. \ref{fig3_4_5}(b). Observe that, by construction $\tilde{n}_i(x_0)=-e_1.$

Next, we consider the Kelvin transform of the solution defined by \eqref{v}.
The function $v$ is well defined on $h(\Om),$ and writing $r = |x|,$ $\om =\frac{x}{|x|}$
and $\Delta_\om$ for the Laplace-Beltrami operator on $\p B_1,$ the function $v$ satisfies
\begin{small}
\begin{eqnarray*}
  \Delta v (r, \om ) &=& \left[ \frac{1}{r^{N-1}} \frac{\p}{\p r}\left( r^{N-1} \frac{\p}{\p r}\right)
+ \frac{1}{r^2}\Delta_\om\right] v (r, \om )
 \\
   &=& \left[\frac{1}{r^{N-1}} \frac{\p}{\p r}\left( r^{N-1} \frac{\p}{\p r}\right)
+\frac{1}{r^2}\Delta_\om\right] \left(\frac{1}{r}\right)^{N-2}u{\textstyle  \left(\frac{1}{r},\om\right)} \\
   &=&  \frac{1}{r^{N-1}} \frac{\p}{\p r} r^{N-1} \frac{\p}{\p r}\left[\left(\frac{1}{r}\right)^{N-2} u{\textstyle  \left(\frac{1}{r},\om\right)}\right]
   +\frac{1}{r^N}\Delta_\om u\\
   &=&  \frac{1}{r^{N-1}} \frac{\p}{\p r}  \left[-(N-2)u{\textstyle  \left(\frac{1}{r},\om\right)}- \frac{1}{r} u_r{\textstyle  \left(\frac{1}{r},\om\right)}\right]
   +\frac{1}{r^N}\Delta_\om u\\
   &=&  \frac{1}{r^{N-1}}   \left[\frac{(N-2)}{r^2}u_r+ \frac{1}{r^2} u_r+ \frac{1}{r^3} u_{rr}\right]
   +\frac{1}{r^N}\Delta_\om u\\
   &=&\frac{1}{r^{N+2}}\left[u_{rr}+\frac{N-1}{1/r} u_r+\frac{1}{1/r^2}\Delta_\om u\right]
   =\frac{1}{r^{N+2}} \Delta u {\textstyle  \left(\frac{1}{r},\om\right)}.
\end{eqnarray*}
\end{small}
Therefore $v>0$ in $\widetilde{\Om}$ satisfies
\begin{equation}\label{eq:kelvin}
-\Delta v(y) =\D\frac{1}{|y|^{N+2}} f\left(|y|^{N-2} v(y)\right), \quad \mbox{in } \widetilde{\Om}, \qquad v= 0,  \quad \mbox{on } \partial \widetilde{\Om}.
\end{equation}

From hypothesis {\rm (H2)}, we see that the function $g(y,s)=\frac{1}{|y|^{N+2}} f\left(|y|^{N-2} s\right)$ satisfies the hypothesis of Corollary \ref{co2}. By construction, it is straightforward that
$|y^\la|<|y|$ for all $y\in \widetilde{\Sigma},$
see fig. \ref{fig3_4_5} (a) and (b), and remain that the origin is at the center of the ball $B.$ By {\rm (H2)},
\begin{equation}\label{ineq:g}
g(y^\la,s)\geq g(y,s) \qq{for all} y\in \widetilde{\Sigma} ,
\end{equation}
where $\widetilde{\Sigma}$ is the maximal cap in the transformed domain, see fig. \ref{fig3_4_5}  (b).
Therefore,  the hypotheses of Corollary \ref{co2} are fulfilled, and hence $v$ has no critical point in the maximal cap $\widetilde{\Sigma}$, which completes the  proof choosing   $\delta$ such that $B_{\delta}(x_0)\cap h(\Om)\subset\widetilde{\Sigma}.$
\end{proof}

We are now ready to state our main result  in this section. This result is composed of two theorems, the first one, Theorem \ref{th:bdd:int} below is the local version in a neighborhood of  a boundary point, the second one, Theorem \ref{th:comp} is the global version.

\begin{thm}\label{th:bdd:int} Assume that $\Omega \subset \R ^N $ is a bounded  domain  with $C^{2}$  boundary. Assume that the nonlinearity $f$ satisfies {\rm (H1)} and {\rm (H2)}.
If $u\in C^2(\overline{\Om})$ satisfies  \eqref{eq:elliptic:problem} and $u>0$ in $\Om$, then for any $x_0\in\p\Om$ there exists a $\delta>0$ only dependent of $\Om$ and $x_0,$ and independent of $f$ and $u$ such that
\begin{equation}\label{bdd}
\max_{\Om} u\leq C \max_{\Om\setminus B_{\delta}(x_0)} u.
\end{equation}
The constant $C$  depends on $\Om$ but not on $x_0,$  $f$ or $u.$
\end{thm}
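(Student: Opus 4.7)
My plan is to combine Theorem \ref{th:kelvin} with the reflection inequality coming from Corollary \ref{co2} applied to the Kelvin--transformed equation \eqref{eq:kelvin}, together with the strict convexity of $\widetilde{\Om}$ at $x_0$ furnished by Lemma \ref{lem:convex}. Following the set--up in Theorem \ref{th:kelvin}, I scale, translate, and rotate so that $x_0=(1,0,\dots,0)$, the exterior tangent ball at $x_0$ is $B_1(0)$, and $\Om\subset B_R(0)\setminus B_1(0)$, with $R$ controlled by $\mathrm{diam}(\Om)$ and the uniform exterior--sphere radius $\rho$ alone (hence independent of $x_0$). Let $v$ denote the Kelvin transform of $u$ on $\widetilde{\Om}=h(\Om)$.

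Applying Corollary \ref{co2} to \eqref{eq:kelvin} in the direction $\tilde{n}_i(x_0)=-e_1$ gives the strict monotonicity $v(y)<v(y^\lambda)$ for every $y\in\widetilde{\Sigma}_\lambda(-e_1)$ and every $\lambda\in(\lambda_0,\lambda^*)$, where $y^\lambda=(-2\lambda-y_1,y')$. Letting $\lambda\uparrow\lambda^*$ and using continuity of $v$, I get the key inequality
\begin{equation*}
v(y)\leq v(y^{\lambda^*})\qquad\text{for every }y\in\widetilde{\Sigma},
\end{equation*}
with $y^{\lambda^*}\in\overline{\widetilde{\Om}}$ by the definition of $\lambda^*$. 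By Lemma \ref{lem:convex}, there exists $\delta_1>0$ depending only on $\Om$ and $x_0$ with $B_{\delta_1}(x_0)\cap\widetilde{\Om}\subset\widetilde{\Sigma}$, and the cap ``depth'' $\eta:=\lambda^*+1>0$ admits a positive lower bound in terms of the $C^2$ data of $\p\Om$ at $x_0$. After possibly shrinking $\delta_1$ so that $\delta_1\leq\eta$, the explicit formula for $y^{\lambda^*}$ and projection onto the first coordinate give, for every $y\in B_{\delta_1}(x_0)\cap\widetilde{\Om}$ (so $y_1\geq 1-\delta_1$),
\begin{equation*}
|y^{\lambda^*}-x_0|\;\geq\;2\lambda^*+y_1+1\;\geq\;2\eta-\delta_1\;\geq\;\eta,
\end{equation*}
so the reflected point stays a positive distance from $x_0$.

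To translate back to the original domain, I use that \eqref{v} is equivalent to $v(h(x))=|x|^{N-2}u(x)$ for $x\in\Om$, since $h$ is an involution. For $x\in\Om\cap B_\delta(x_0)$, with $\delta>0$ to be chosen, set $y=h(x)$ and $x^{**}:=h(y^{\lambda^*})\in\Om$. The reflection inequality then reads
\begin{equation*}
|x|^{N-2}u(x)=v(h(x))\leq v(y^{\lambda^*})=|x^{**}|^{N-2}u(x^{**}),
\end{equation*}
hence $u(x)\leq(|x^{**}|/|x|)^{N-2}u(x^{**})\leq R^{N-2}u(x^{**})$, using $|x|\geq 1$ and $|x^{**}|\leq R$ in the scaled coordinates. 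I choose $\delta>0$ small enough that (i) $h(\Om\cap B_\delta(x_0))\subset B_{\delta_1}(x_0)\cap\widetilde{\Om}$, and (ii) the lower bound $|y^{\lambda^*}-x_0|\geq\eta$ forces $|x^{**}-x_0|>\delta$; this is possible thanks to the bi--Lipschitz character of $h$ on $\overline{\widetilde{\Om}}$, which is compact and bounded away from the origin. Both conditions depend only on $\Om$ and $x_0$. Maximizing over $x\in\Om\cap B_\delta(x_0)$ and combining with the trivial estimate on $\Om\setminus B_\delta(x_0)$ yields \eqref{bdd} with $C=R^{N-2}$, a constant depending only on $\Om$.

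The main obstacle will be the quantified strict convexity of $\widetilde{\Om}$ at $x_0$: namely extracting a positive cap depth $\eta$ and a neighborhood size $\delta_1$ from the $C^2$ regularity of $\p\Om$ in a way that behaves well under the inversion $h$. Both ingredients are supplied by Lemma \ref{lem:convex} and the uniform exterior sphere condition; once they are in hand, the reflection inequality $v(y)\leq v(y^{\lambda^*})$ and the Kelvin--transform identity $v(h(x))=|x|^{N-2}u(x)$ give the result by a routine chase through the formulas.
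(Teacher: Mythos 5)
Your proposal is correct in substance and lands on the same constant $C=R^{N-2}$ via the same basic ingredients (Kelvin transform, moving planes, Lemma~\ref{lem:convex}, translation back through $h$), but it takes a more circuitous route than the paper's proof. You derive and use the reflection inequality $v(y)\le v(y^{\lambda^*})$ and then chase the specific reflected point $x^{**}=h(y^{\lambda^*})$ away from $x_0$ via a lower bound on the cap depth $\eta=\lambda^*+1$ and the bi--Lipschitz character of $h$ on $\overline{\widetilde\Om}$. The paper instead uses the \emph{no--critical--points} conclusion of Theorem~\ref{th:kelvin} directly: since $\nabla v\ne 0$ on $\widetilde\Sigma$ while $v>0$ in $\widetilde\Om$ and $v=0$ on $\partial\widetilde\Om$, the maximum of $v$ is automatically attained in $\widetilde\Om\setminus\widetilde\Sigma$; reading this off through the Kelvin identity $v(h(x))=|x|^{N-2}u(x)$ and the bound $1\le|x|\le R$ gives $\max_\Om u\le R^{N-2}\max_{\Om\setminus h^{-1}(\widetilde\Sigma)}u$, and one then simply picks $\delta$ with $B_\delta(x_0)\cap\Om\subset h^{-1}(\widetilde\Sigma)$ (possible because $h^{-1}(\widetilde\Sigma)$ is an open neighborhood of $x_0$ in $\Om$ by Lemma~\ref{lem:convex} and continuity of $h$). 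Your tracking of $x^{**}$, the estimate $|y^{\lambda^*}-x_0|\ge\eta$, and the bi--Lipschitz step are all unnecessary once you note that any cap point's max competitor lies outside $\widetilde\Sigma$; and the assertion that $\eta$ ``admits a positive lower bound in terms of the $C^2$ data'' is not supplied by the statement of Lemma~\ref{lem:convex} (only nonemptiness is) -- it is true and can be extracted from the lemma's proof, but for the purpose of getting a $\delta$ depending only on $\Om$ and $x_0$ you only need $\lambda^*>\lambda_0$, which is a purely geometric fact independent of $f$ and $u$. One further small care point in your version: $y^{\lambda^*}$ may a priori lie on $\partial\widetilde\Om$, so $x^{**}$ may lie on $\partial\Om$; this is harmless since $u=0$ there, but it means you should write $x^{**}\in\overline\Om\setminus B_\delta(x_0)$ rather than $x^{**}\in\Om$ when maximizing. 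None of these affect the conclusion; the paper's route simply avoids all of them.
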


\begin{proof} Let $x_0\in\p\Omega,$  if   there exists a $\delta>0$ such that $ B_\delta (x_0)\cap \Om\subset \Om^\bigstar,$ (as it happens in convex sets), the proof follows from Theorem \ref{th:kelvin}. We concentrate our attention in the complementary set.

\smallskip

Let $x_0\in \p\Om,$ and let $\overline{B}$ be the closure of a ball intersecting $\overline{\Om}$ only at the point $x_0.$
Let  $v$ be as defined  in \eqref{v} for  $y\in\widetilde{\Om}=h(\Om).$
By a direct application of Theorem \ref{th:kelvin}, $v$  has no critical point in the maximal cap $\widetilde{\Sigma}$, and therefore
\begin{equation}\label{eq:kelvin:max}
\max_{\widetilde{\Om}} v(y)=\max_{\widetilde{\Om}\setminus \widetilde{\Sigma}} v(y).
\end{equation}
From definition of $v,$ see \eqref{v}, we obtain that
$$\max_{\Om} |x|^{N-2}u(x)=\max_{\Om\setminus h^{-1}(\widetilde{\Sigma})} |x|^{N-2}u(x),
$$
where $h^{-1}(\widetilde{\Sigma})$ is the inverse image of the maximal cap, see fig \ref{fig3_4_5}(b)-(c). Due to the boundedness of $\Om,$   see \eqref{bd:Om}, we deduce
$$\max_{\Om} u(x)\leq R^{N-2} \max_{\Om\setminus h^{-1}(\widetilde{\Sigma})} u(x),
$$
which concludes the proof choosing $C=R^{N-2}$ and  $\delta$ such that $B_{\delta}(x_0)\subset h^{-1}(\widetilde{\Sigma})$ and therefore $\Om\setminus h^{-1}(\widetilde{\Sigma})\subset \Om\setminus B_{\delta}(x_0).$
\end{proof}

The following Theorem  is just a compactification process of the above result.

\begin{thm}\label{th:comp} Assume that $\Omega \subset \R ^N $ is a bounded  domain  with $C^{2}$  boundary. Assume that the nonlinearity $f$ satisfies {\rm (H1)} and {\rm (H2)}.
If $u\in C^2(\overline{\Om})$ satisfies  \eqref{eq:elliptic:problem} and $u>0$ in $\Om$, then  there exists two constants $C$ and $\delta$ depending only on $\Om$ and not on $f$ or $u$ such that
\begin{equation}\label{bdd:2}
\max_{\Om} u\leq C \ \max_{\Om_{\delta} } u
\end{equation}
where $\Om_{\delta}:=\{x\in\Om\ : \ d(x,\p\Om)>\delta\}.$
\end{thm}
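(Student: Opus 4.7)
\emph{Proof plan.} My plan is to upgrade the pointwise estimate of Theorem~\ref{th:bdd:int} to a global one by exploiting the compactness of $\p\Om$. First I would uniformize the local constants in Theorem~\ref{th:bdd:int}: the radius $\delta$ and multiplier $C$ at a point $x_0\in\p\Om$ depend on $x_0$ only through quantities that vary continuously along $\p\Om$, namely the exterior sphere radius from condition~(P), the Kelvin scaling bound $R$ in \eqref{bd:Om}, and the size of the convex cap $\widetilde\Sigma$ supplied by Lemma~\ref{lem:convex}. Since $\p\Om$ is a compact $C^2$ manifold, these admit uniform positive bounds, so there exist $\delta_0>0$ and $C>0$ depending only on $\Om$ such that $\max_\Om u\le C\,\max_{\Om\setminus B_{\delta_0}(x_0)}u$ for every $x_0\in\p\Om$.

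Next, by compactness of $\p\Om$ I would select $x_1,\dots,x_k\in\p\Om$ whose balls $\{B_{\delta_0/2}(x_i)\}_{i=1}^{k}$ cover $\p\Om$, and note that $K:=\overline\Om\setminus\bigcup_i B_{\delta_0/2}(x_i)$ is a compact subset of $\Om$, hence contained in $\Om_\delta$ for some $\delta>0$ depending only on $\Om$. I would then iterate the uniform local bound along a chain of boundary points. Starting from a point $x^*$ realizing $M:=\max_\Om u$, either $x^*\in K\subset\Om_\delta$ and \eqref{bdd:2} is immediate, or $x^*\in B_{\delta_0/2}(x_{i_1})$ and Theorem~\ref{th:bdd:int} at $x_{i_1}$ yields $y_1\in\Om\setminus B_{\delta_0}(x_{i_1})$ with $M\le C\,u(y_1)$. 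If $y_1\in K$ the proof is complete with constant $C$; otherwise $y_1\in B_{\delta_0/2}(x_{i_2})$ with $i_2\neq i_1$, since $B_{\delta_0/2}(x_{i_1})\subset B_{\delta_0}(x_{i_1})$ and $y_1$ is excluded from the latter. Reapplying Theorem~\ref{th:bdd:int} at $x_{i_2}$ and iterating, if the chain enters $K$ after $n\le k$ steps the bound follows with constant~$C^n$.

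The hard part will be ensuring that this iteration terminates in a bounded number of steps rather than cycling among cover elements. Since Theorem~\ref{th:bdd:int} at $x_{i_n}$ only excludes the single ball $B_{\delta_0}(x_{i_n})$, the index sequence $i_1,i_2,\dots$ is a priori only consecutively distinct and could in principle revisit earlier indices. To force termination I plan to either (a)~refine the cover to a pairwise-disjoint Vitali-type subcollection, so that membership in one $B_{\delta_0/2}(x_i)$ precludes membership in the others and, combined with a careful choice of initial index, produces a strictly increasing index sequence; or (b)~establish a multi-point strengthening of Theorem~\ref{th:bdd:int} in which the Kelvin inversion and moving planes arguments are carried out simultaneously at several boundary points, so that the excluded region $\bigcup_j B_{\delta_0}(x_{i_j})$ accumulates monotonically across iterations. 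Option~(b) appears more robust, and its key geometric input remains the uniform convexity of $h_{x_0}(\Om)$ near $x_0$ from Lemma~\ref{lem:convex}. Either route reduces termination to a finite pigeonhole on $\{1,\dots,k\}$ and produces constants $C$ and $\delta$ depending only on~$\Om$, as required.
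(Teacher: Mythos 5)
Your first step—uniformizing $C$ and $\delta_0$ over $x_0\in\p\Om$ via the compactness of $\p\Om$ and the uniform exterior sphere radius $\rho$—is exactly what the paper does: its entire proof of Theorem \ref{th:comp} consists of observing that $C=(R/\rho)^{N-2}$ and $\de$ can be taken depending only on $\Om$. The paper does \emph{not} chain boundary points; it treats the passage from ``$\max_\Om u\le C\max_{\Om\setminus B_\de(x_0)}u$ for every $x_0$'' to ``$\max_\Om u\le C\max_{\Om_\de}u$'' as immediate once the constants are uniform. You have correctly noticed that this passage is not formally immediate: the right-hand side of \eqref{bdd} still ranges over a set that touches $\p\Om$ away from $x_0$, so a single application of Theorem \ref{th:bdd:int} does not by itself push the maximum into the collar $\Om_\de$.

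Your proposed fix, however, has the gap you yourself flag, and I do not think either option (a) or (b) closes it as stated. In the chain $i_1,i_2,\dots$ you only know $|x_{i_n}-x_{i_{n+1}}|\ge\delta_0/2$ (consecutive indices distinct), and since each application of Theorem \ref{th:bdd:int} bounds the \emph{global} maximum $M$ by $C\,u(y_n)$ (not $C\,u(y_{n-1})$), cycling among a few boundary balls is consistent with the estimates and never forces $y_n\in K$; a disjoint Vitali subcover does not repair this, because $y_n$ may land in any of the remaining balls regardless of disjointness. The cleaner route—and, I believe, what the paper is implicitly relying on—is not to use only the \emph{absence of critical points} in $\widetilde\Sigma$, but the stronger \emph{strict monotonicity} $\p v/\p\tilde n_i>0$ in every cap $\widetilde\Sigma_\la$ from Corollary \ref{co2}. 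Then for any $z\in\Om$ with $d(z,\p\Om)$ small, taking $x_0$ to be the nearest boundary point and following the inward normal from $h(z)$ until it leaves the maximal cap $\widetilde\Sigma$ at a point $z'$ on $T_{\la^\star}$, one gets $v(h(z))\le v(z')$, hence $u(z)\le R^{N-2}u(h^{-1}(z'))$, and $h^{-1}(z')$ lies at a depth in $\Om$ bounded below uniformly (by compactness of $\p\Om$ and Lemma \ref{lem:convex}). This yields a genuine pointwise collar-to-interior comparison without any iteration, and the finite cover/termination issue disappears. I would restructure the argument along those lines rather than attempt to make the chain terminate.
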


\begin{proof}
Since $\Om$ is a  $C^2$ domain, it  satisfies a uniform exterior sphere condition (P). Thanks to that property, we can choose a constant $C=(R/\rho)^{N-2}$ satisfying the above inequality.

Moreover, let us note that from Theorems \ref{th:kelvin} and \ref{th:bdd:int}, the constant $\delta$ only depends on geometric properties of the domain $\Om.$
\end{proof}

\section{Proof of Theorem \ref{th:apriori}}
\label{sec:apriori}
\begin{proof}[Proof of Theorem \ref{th:apriori}]
We shall argue by contradiction.
Let   $\{u_k\}_k$ be a sequence of classical positive solutions to \eqref{eq:elliptic:problem} and assume that
\begin{equation}\label{uk:infty}
\lim_{k \to \infty} \|u_k\|_{\infty} = + \infty.
\end{equation}

Let $C,\de>0$ be as in Theorem \ref{th:comp}.
Let $x_k \in \overline{\Om_{\delta}}$ be such that
$$u_k(x_k) = \max_{\Om_{\delta}} u_k.$$
Since
$\, 0<\frac{1}{C}\leq \frac{u_k(x_k)}{\|u_k\|_{\infty}}\leq 1  ,\, $
by taking a subsequence if needed, we may assume that
$\lim_{k\to \infty} \frac{u_k(x_k)}{\|u_k\|_{\infty}}=L>0$
and there exists $x_0 \in \overline{\Om_{\delta}}$ such that
$\lim_{k\to \infty} x_k = x_0\in\overline{\Om_{\delta}}.$

\smallskip

As observed in \cite{Turner,Nussbaum,Brezis-Turner}, \cite[p. 44]{Figueiredo-Lions-Nussbaum}, there exists a constant $C_1>0$ such that
$$\int_{\Om}u_k\phi_1\leq C_1\qquad \int_{\Om}f(u_k)\phi_1\leq C_1.$$

Let $d_0=dist (x_0,\p\Om)\geq\de>0,$
and let $B_{\rho}=B(x_0,\rho)$ for $\rho\in(0,\de)$. Since
$$\min_{B_{d_0/2}}\phi_1\int_{B_{d_0/2}} f(u_k) \leq\int_{B_{d_0/2}} f(u_k)\phi_1 \leq\int_\Om f(u_k)\phi_1 ,$$
there exists a constant $C_2$ independent of $k$ such that
\begin{equation}\label{cota:f:L1}
\int_{B_{d_0/2}} u_k \leq C_2,\qquad \int_{B_{d_0/2}} f(u_k) \leq C_2.
\end{equation}

Let us now  define
\begin{equation}\label{w}
w_k(x) =\frac{ u_k(x)} { \|u_k\|_{\infty}},\qq{for}x\in\Om,
\end{equation}
hence
$w_k(x_k) \geq \frac{1}{C} >0$ for all $k,$ and
\begin{equation}\label{l}
\lim_{k\to \infty} w_k(x_k)=L>0.
\end{equation}
Note also that
$$-\Delta w_k(x)=-\frac{1}{\|u_k\|_{\infty}}\ \Delta u_k(x) = \frac{1}{\|u_k\|_{\infty}}\,f
\big(u_k(x)\big)\qq{for all}x\in\Om.
$$

Let us fix $q\in\left(1,\frac{N+2}{4}\right),$ for $N\geq 3.$ We observe that $N^\star (1-1/q)<1.$ Taking into account  hypothesis {\rm (H2)} on $f,$  \eqref{cota:f:L1}, and \eqref{uk:infty} we deduce
\begin{eqnarray}\label{cota:f:Lq}
\frac{1}{\|u_k\|_{\infty}}\,\left(\int_{B_{d_0/2}} \left|f
\big(u_k(x)\big)\right|^q\right)^{1/q} &=&\frac{1}{\|u_k\|_{\infty}}\,\left(\int_{B_{d_0/2}} \left|f
\big(u_k(x)\big)\right|^{q-1}\left|f
\big(u_k(x)\big)\right|\right)^{1/q}\nonumber\\
&\leq & C \|u_k\|_{\infty}^{N^\star (1-1/q)-1}\to 0\qq{as}k\to\infty.
\end{eqnarray}
From interior elliptic regularity results, see \cite{ADN1,ADN2}, we can write
\begin{equation}\label{cota:v:W2q}
\|w_k\|_{W^{2,q} (B_{d_0/4})}\leq C\left( \|u_k\|_{\infty}^{N^\star (1-1/q)-1}+\|w_k\|_{L^q (B_{d_0/2})}\right)\leq C.
\end{equation}

From compact Sobolev imbeddings, at least for a subsequence,
\begin{equation}\label{converg:w}
w_k\to w\qq{as} k\to\infty ,\qquad\mbox{in}\quad W^{1,q} (B_{d_0/4}).
\end{equation}

Due to $w_k> 0$ then $w\geq 0$ in $B_{d_0/4}.$ Moreover, either $\int_{B_{d_0/4}}w>0$ or $\int_{B_{d_0/4}}w=0.$
Assume that $\int_{B_{d_0/4}}w=C>0.$ From \eqref{converg:w} and compact imbeddings, we obtain
$\int_{B_{d_0/4}}w_k\to \int_{B_{d_0/4}}w,$ as $k\to\infty,$
therefore
$\int_{B_{d_0/4}}w_k\geq\frac{C}{2}$ for any $k$ big enough.
By definition
$\int_{B_{d_0/4}}w_k=\frac{1}{\|u_k\|_{\infty}}\,
\int_{B_{d_0/4}}u_k,$
therefore
$\int_{B_{d_0/4}}u_k\geq\frac{C}{2}\|u_k\|_{\infty}\to\infty, $ as $k\to\infty,$
which contradicts \eqref{cota:f:L1}.
Consequently $\int_{B_{d_0/4}}w=0,$ and therefore
$\int_{B_{d_0/4}}w_k\to 0,$ as $k\to\infty.$
By definition of $w_k$, see \eqref{w}, $\|w_k\|_{L^{\infty}}\leq 1$, which implies
$$0\leq \int_{B_{d_0/4}}w_k^q\leq \|w_k\|_{L^{\infty} (B_{d_0/4})}^{q-1}\int_{B_{d_0/4}}w_k\leq \int_{B_{d_0/4}}w_k\to 0\qq{as}k\to\infty,
$$
therefore, $\|w_k\|_{L^{q} (B_{d_0/4})}\to 0.$  Plugging this in \eqref{cota:v:W2q}, we deduce
$\|w_k\|_{W^{2,q} (B_{d_0/8})}\to 0$ as $k\to\infty.$
Due to \eqref{converg:w}, in particular
$
\|\nabla w\|_{L^{q} (B_{d_0/8})}=0,
$
and from Hölders inequality
$\|\nabla w\|_{L^{1} (B_{d_0/8})}=0.$

Obviously
$L\leq |L-w(x)|+|w(x)|,$
and integrating on $B_{d_0/16}$ we obtain
\begin{equation}\label{ineq:v:geq:0}
\int_{B_{d_0/16}}|w(x)-L|\geq L|B_{d_0/16}|-\int_{B_{d_0/16}}w=L|B_{d_0/16}|>0.
\end{equation}

On the other hand, adding $\pm w(x_k+y),$ $\pm w_k(x_k+y),$ $\pm w_k(x_k),$ for $y\in B_{d_0/16}(0),$  we have
$$
\int_{B_{d_0/16}}|w(x)-L|=\int_{B_{d_0/16}(0)}|w(x_0+y)-L|\leq I_1+I_2+I_3+I_4,$$
where
\begin{eqnarray*}
  I_1 &=& \int_{B_{d_0/16}(0)}|w(x_0+y)-w(x_k+y)|, \\
  I_2 &=& \int_{B_{d_0/16}(0)}|w(x_k+y)-w_k(x_k+y)|, \\
  I_3 &=& \int_{B_{d_0/16}(0)}|w_k(x_k+y)-w_k(x_k)|, \\
  I_4 &=& \int_{B_{d_0/16}(0)}|w_k(x_k)-L|.
\end{eqnarray*}
Set $g(t)=w(tx_0+(1-t)x_k+y),$ then $g'(t)=(x_0-x_k)\cdot\nabla w(tx_0+(1-t)x_k+y),$ and thus
$$w(x_0+y)-w(x_k+y)=g(1)-g(0)=\int_0^1g'(t)\,dt=\int_0^1
(x_0-x_k)\cdot\nabla w(tx_0+(1-t)x_k+y)\,dt.
$$
Therefore
$$|w(x_0+y)-w(x_k+y)|\leq |x_0-x_k|\int_0^1|\nabla w(tx_0+(1-t)x_k+y)|\,dt,
$$
and consequently, integrating on $B_{d_0/16}(0)$ and using Fubini's theorem we deduce
\begin{eqnarray*}
    I_1 &\leq & |x_0-x_k| \int_{B_{d_0/16}(0)}\left(\int_0^1|\nabla w(tx_0+(1-t)x_k+y)|\,dt \right)\,dx\\
     &=& |x_0-x_k|\int_0^1\left(\int_{B_{d_0/16}(0)} |\nabla w(tx_0+(1-t)x_k+y)|\,dx\right)\,dt \\
     &\leq &  |x_0-x_k|\, \|\nabla w\|_{L^{1} (B_{d_0/8})}=0.
  \end{eqnarray*}

Moreover, due to \eqref{converg:w}
$$
I_2\leq \int_{B_{d_0/8}} |w-w_k|\to 0\qq{as}k\to\infty.
$$
Reasoning as we did to bound $I_1,$  we can write
$$
I_3\leq \int_{B_{d_0/16}(0)}|y||\nabla w_k(x_k+ty|\leq \frac{d_0}{8} \int_{B_{d_0/8}}|\nabla w_k|\to 0\qq{as}k\to\infty.
$$
Finally
$$
I_4= |w_k(x_k)-L|\int_{B_{d_0/16}(0)}\, dx= |B_{d_0/16}||w_k(x_k)-L|\to 0\qq{as}k\to\infty.
$$
Therefore, $\int_{B_{d_0/16}(0)}|w(x_0+y)-L|=0,$
which contradicts \eqref{ineq:v:geq:0} and completes the proof.
\end{proof}

\appendix
\section{}
In this Appendix we prove that  for any boundary point of a $C^2$ domain, the maximal cap in the transformed domain is nonempty. This could seem surprising in presence of highly oscillatory boundaries.
For example, suppose  the boundary of $\Omega$ includes $ \G_2 =\big\{(x,f(x)):f(x):= x^5\sin \left(\frac{1}{x}\right),\ x\in [-0.01,0.01]\big\},$
see fig. \ref{fig6_7_8_9}(b). Let $ h(\G_2)$ be the image through the inversion map into the unit ball $B,$ and let $\G_3$ be  the arc of the boundary  $\p B$ given by $\G_3=\{(x,g(x)) : g(x):=\sqrt{1-x^2},\ x\in [-0.01,0.01]\},$ see fig. \ref{fig6_7_8_9}(c). At this scale, the oscillations are not appreciable. We plot in \ref{fig6_7_8_9}(d) the derivative of the 'vertical' distance between the boundary $ \G_2$ and the ball, concretely we plot $f'(x)-g'(x)$ for $x\in [-0.01,0.01]$.
We plot in \ref{fig6_7_8_9}(e) the second derivative of the 'vertical' distance between the boundary and the ball, which is $f''(x)-g''(x)$ for $x\in [-5\cdot 10^{-4},5\cdot 10^{-4}]$.
Let us observe that this second derivative is strictly positive,  and that $f''(0)-g''(0)=1.$
Consequently, the first derivative is strictly increasing,
and therefore the 'vertical' distance $f(x)-g(x)$ does not oscillate.

Moreover, let us consider the image through the inversion map of the straight line $y=1,$ i.e.
$h(x,1) =h\left(\{(x,1),\ x\in [-0.01,0.01]\}\right) .$
In fig. \ref{fig6_7_8_9}(f)-(g) we plot the second coordinate of the difference $ h(\G_2)-h(x,1).$
The oscillation phenomena is present here.
In fig. \ref{fig6_7_8_9}(h) we plot the second coordinate of the difference $ h(\G_2)-h(\p B).$ This difference does not oscillate.

\begin{figure}[htb]
$$\kern -2em
\begin{array}{ccc}
\includegraphics[width=4cm]{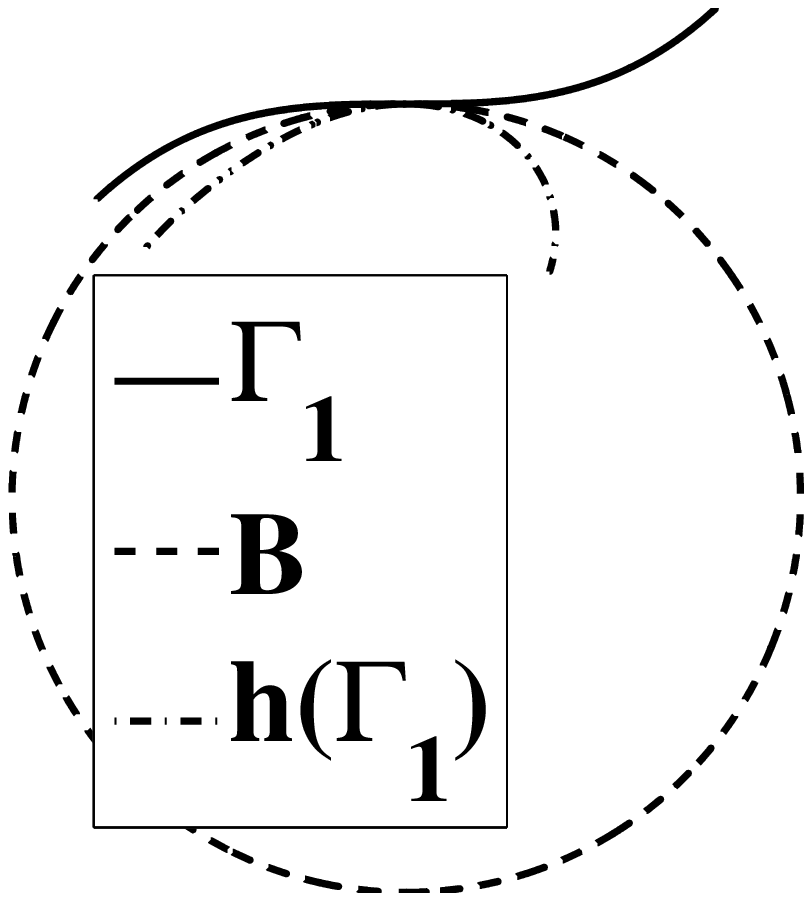}&
\includegraphics[width=4cm]{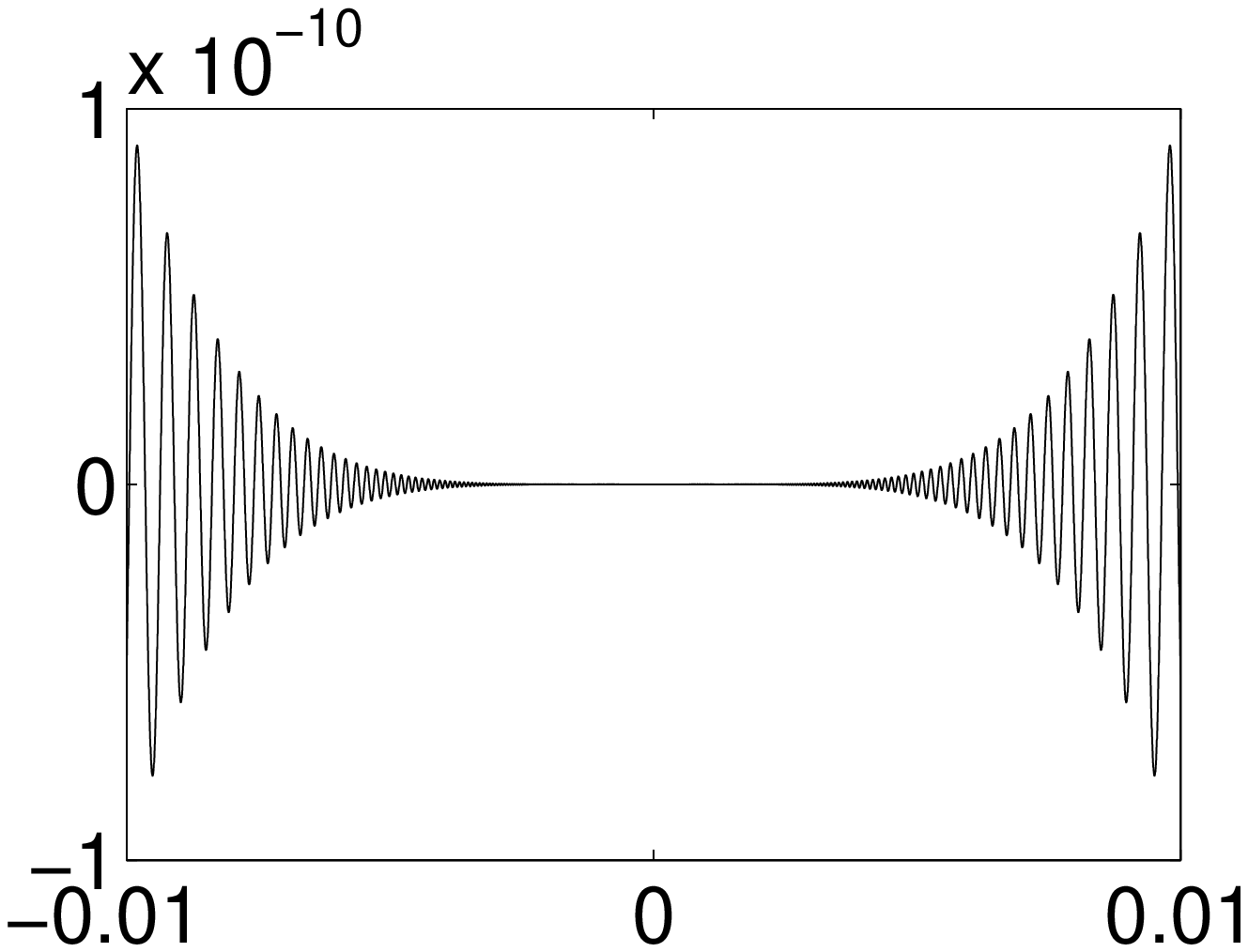} &
\includegraphics[width=4cm]{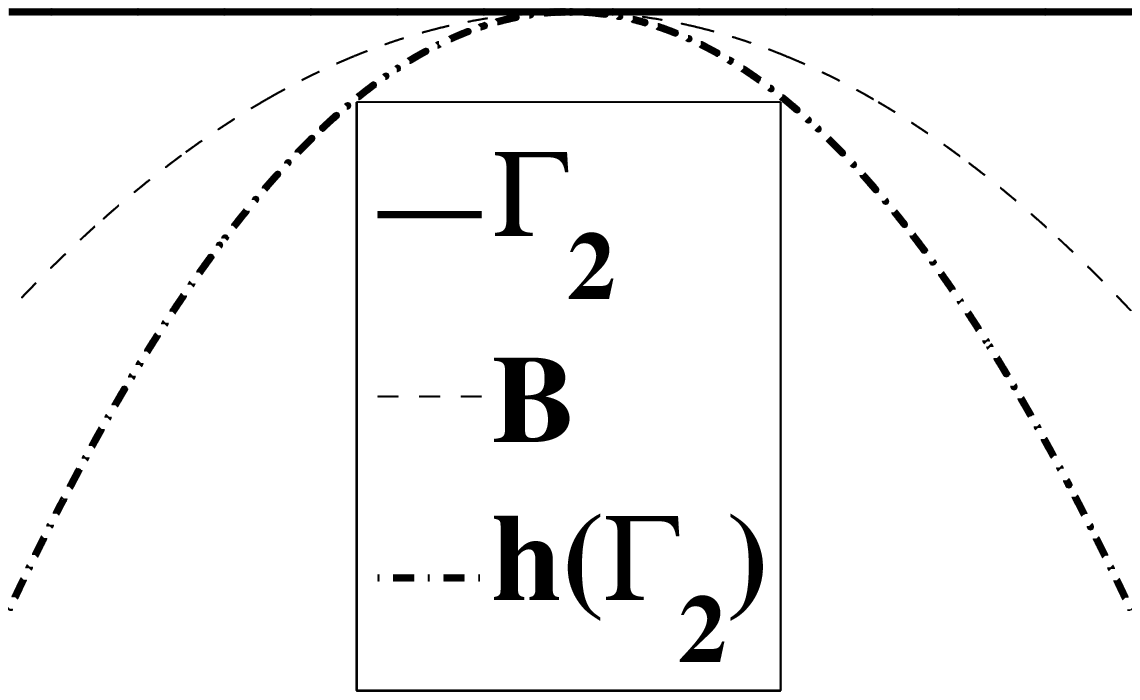} \\
 {\rm (a)} & {\rm (b)} & {\rm (c)}
 \end{array}
$$\kern -2em
$$\begin{array}{cc}
\includegraphics[width=4cm]{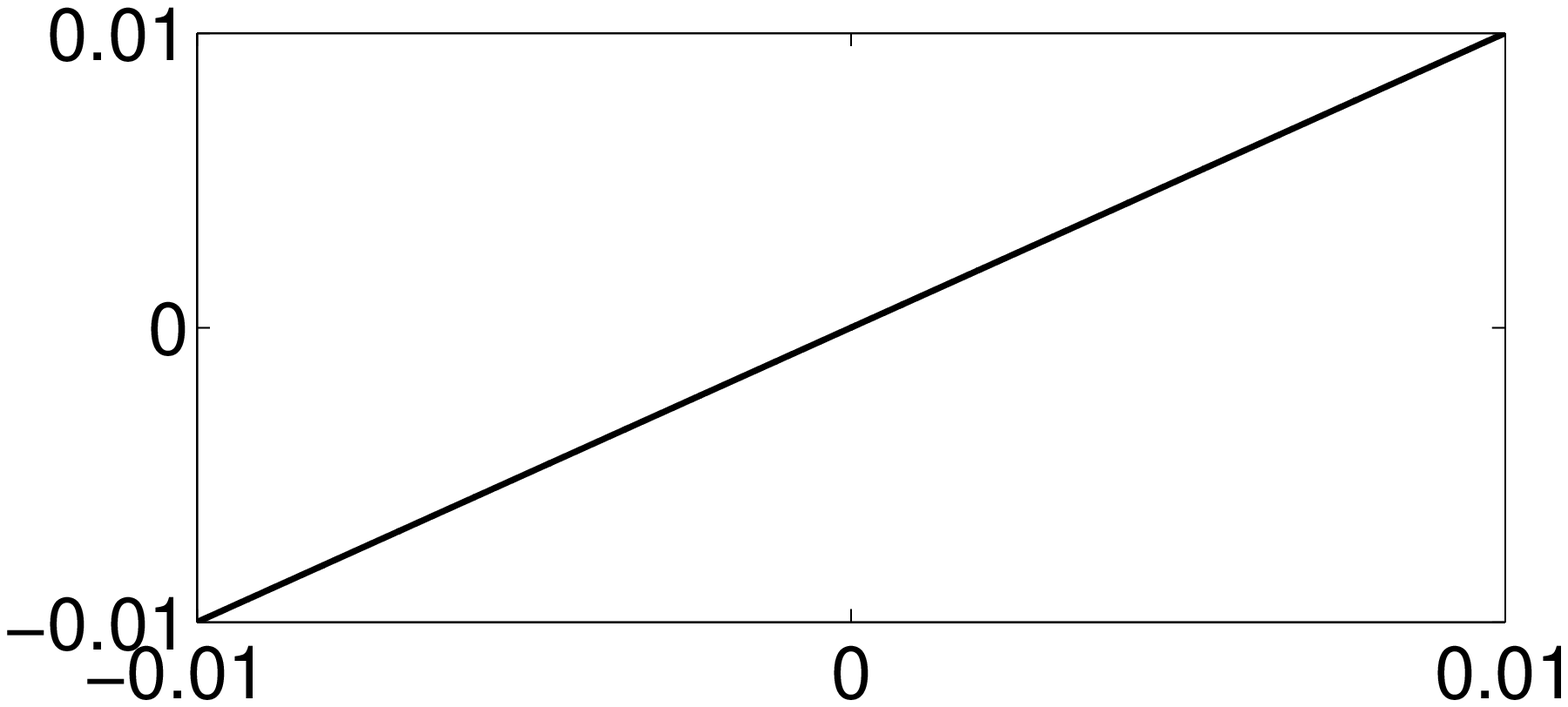} &
\includegraphics[width=4cm]{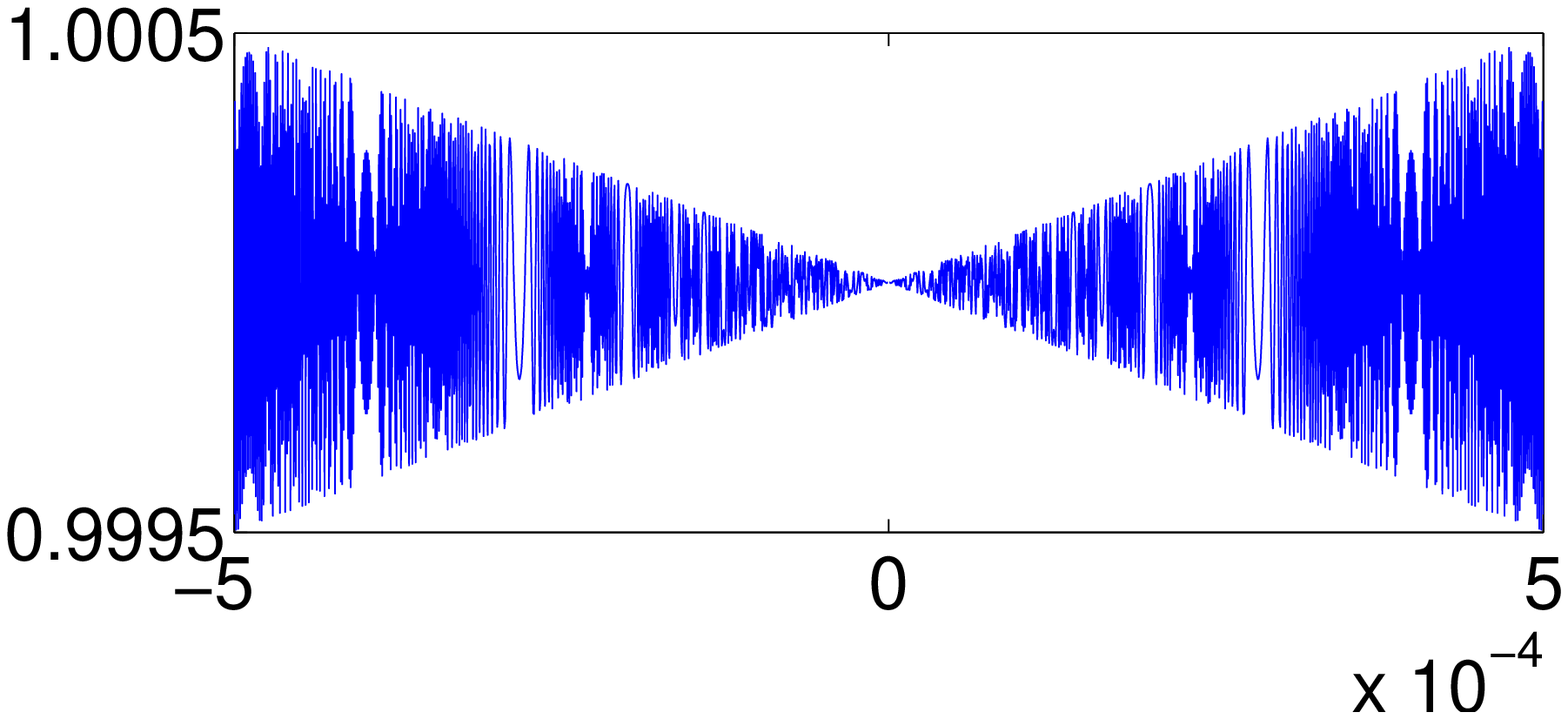}\\
 {\rm (d)} & {\rm (e)}
\end{array}$$
$$\kern -2em
\begin{array}{ccc}
\includegraphics[width=4cm]{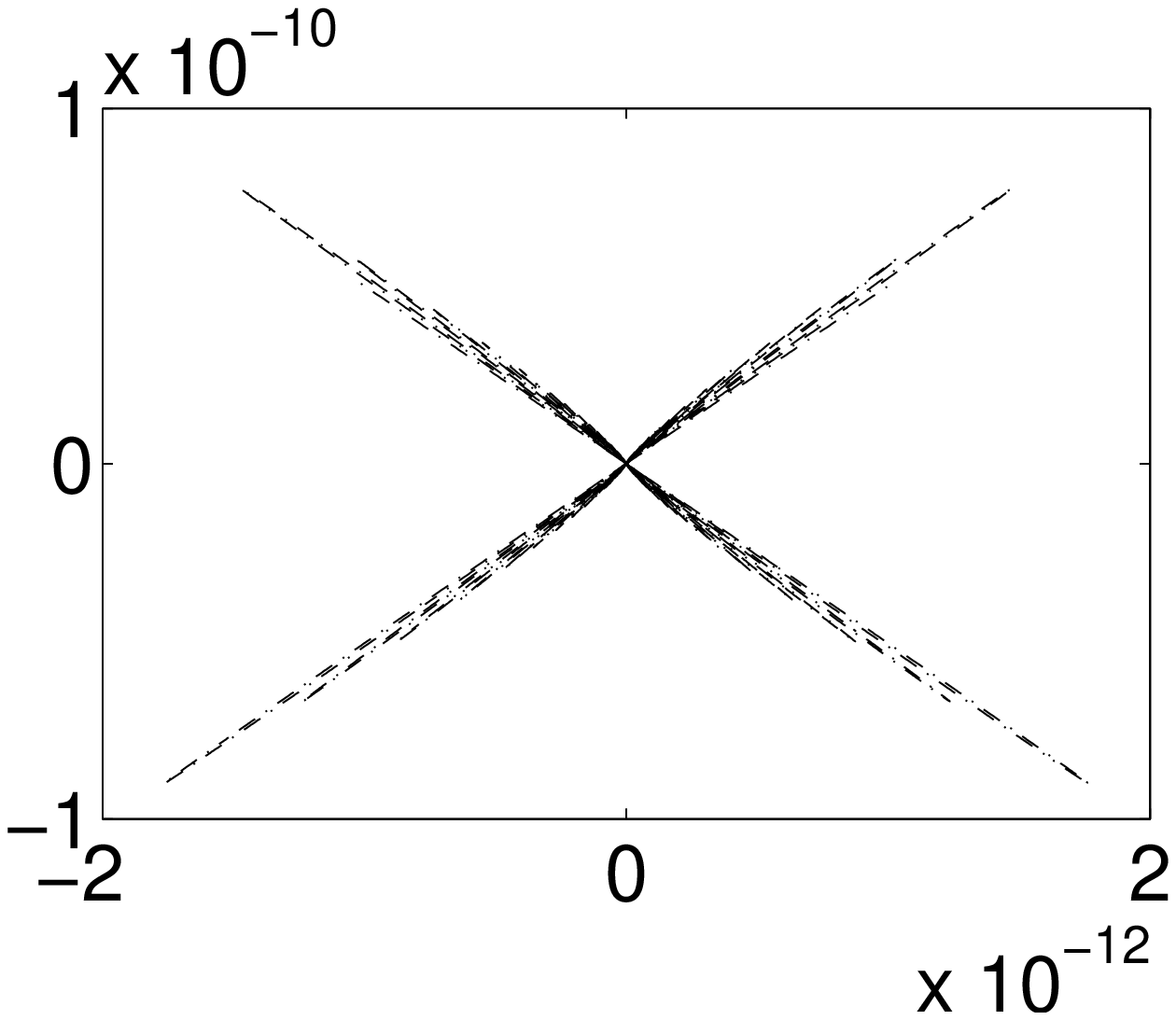} & \includegraphics[width=4cm]{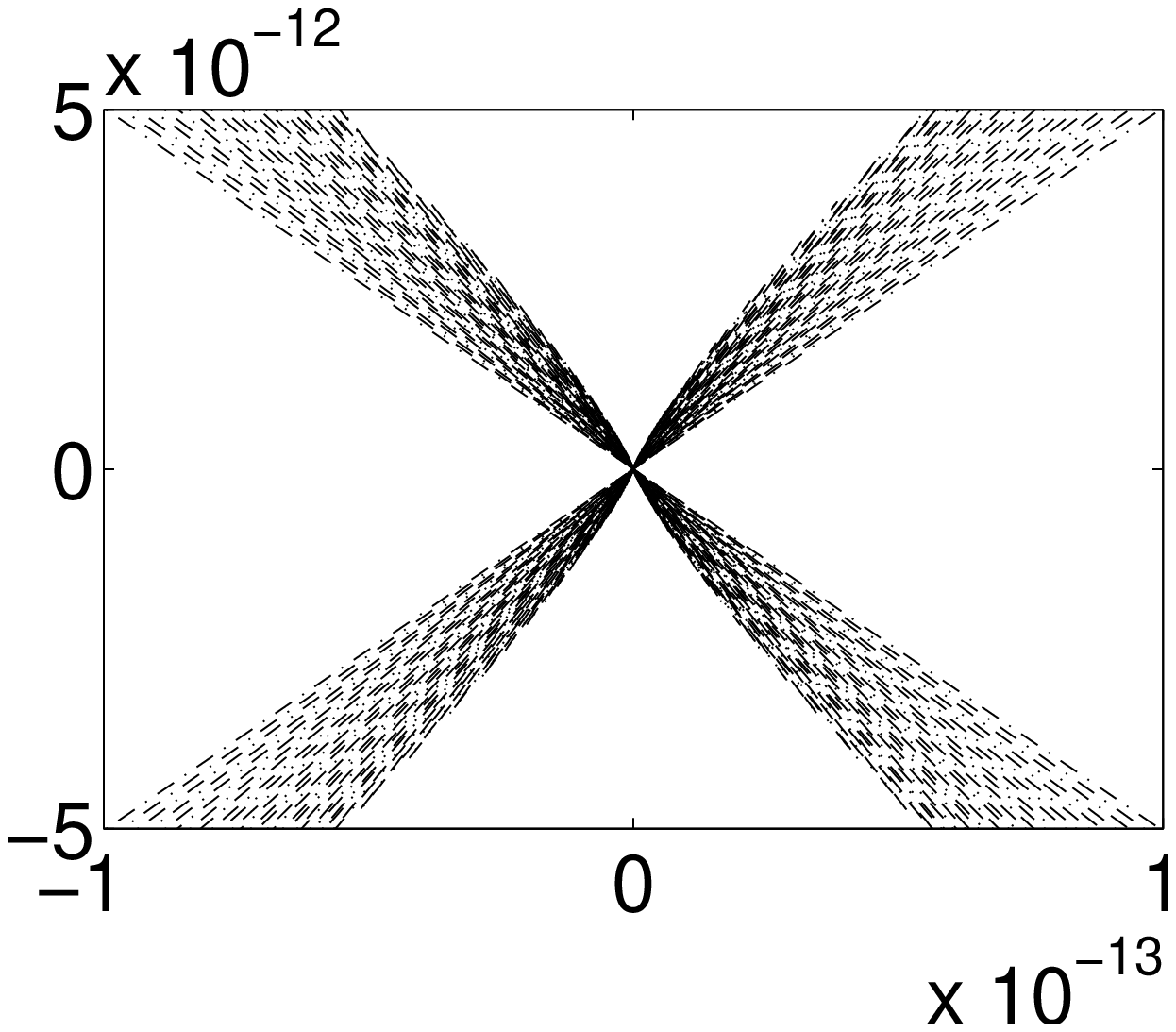} &
\includegraphics[width=4cm]{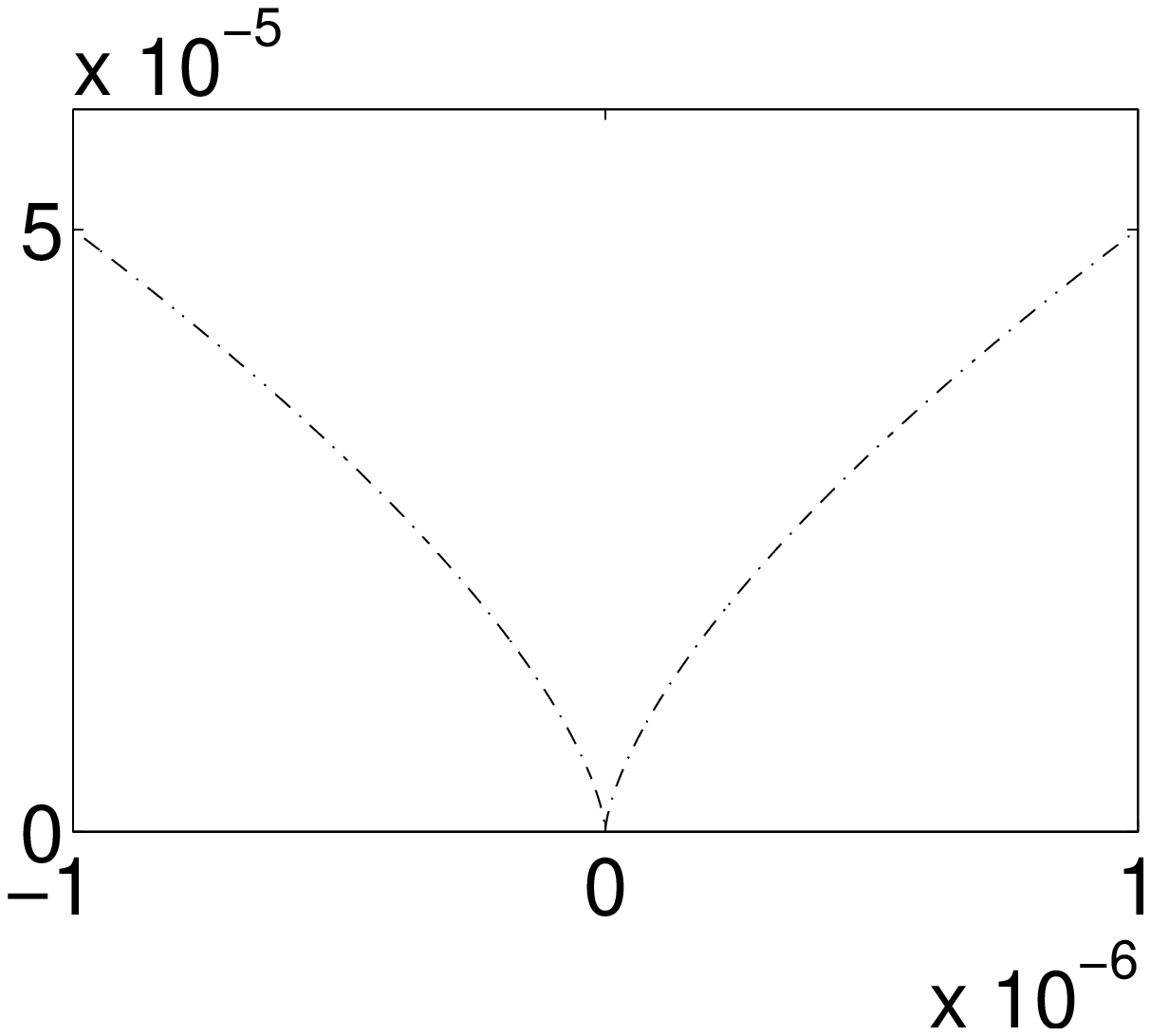} \\
 {\rm (f)} & {\rm (g)} & {\rm (h)}
\end{array}
$$
\caption{(a) An inflection point at the boundary $\G_1 $ joint with the inversion $h(\G ),$ and the unit circumference;
(b) A degenerated critical point at the boundary $ \G_2 $;
(c) $\G_2 $ joint with its inversion  into the unit ball, $h(\G_2),$ and the arc of circumference, $\G_3$;
(d) $f'(x)-g'(x)$ for $x\in [-0.01,0.01]$;
(e) $f''(x)-g''(x)$ for $x\in [-5\cdot 10^{-4},5\cdot 10^{-4}]$;
(f) Second coordinate of the difference  $h(\G_2 )-h(x,1)$ where $ h(x,1)$ is the image of the straight line $y=1$;
(g) a zoom of the same graphic;
(h) Second coordinate of the  difference $ h(\G_2 )-h(\G_3).$}
\label{fig6_7_8_9}
\end{figure}

\smallskip

\smallskip

In fig. \ref{fig6_7_8_9}(a) we draw the inversion of the boundary into the unit ball at an inflexion point; more
precisely we set $\G_1: =\big\{(x,f(x)):f(x)= \frac{x^3}{2}+1,\ x\in [-\pi/4,\pi/4]\big\},$
which has an inflexion point at   $x=0.$

\smallskip

Let $h$ denote the  inversion map defined in \eqref{def:h}, and let $\widetilde{\Om}=h(\Om)$ denote the image through the inversion map into the  ball $B$. For any $x_0\in\p\Om$, let $\tilde{n}_i(x_0)$ be the normal inward at $x_0$ in the transformed domain $\tilde{\Om},$
and let $\widetilde{\Sigma}= \widetilde{\Sigma} (\tilde{n}_i(x_0))$ be its maximal cap,  see fig. \ref{fig3_4_5}(b).

\begin{lem}\label{lem:convex}
If   $\Omega \subset \R ^N $ is a bounded  domain  with $C^{2}$  boundary, then for
any $x_0\in\p\Om,$ there exists a maximal cap $\widetilde{\Sigma}=\widetilde{\Sigma} (\tilde{n}_i(x_0))$  non empty.
\end{lem}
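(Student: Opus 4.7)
The plan is to reduce the lemma to showing that $h$ maps a neighborhood of $x_0$ in $\p\Om$ to a strictly convex surface at $h(x_0)$, and then to extract from this local strict convexity a non-empty maximal cap in the inward normal direction. First, using the uniform exterior sphere condition (P), after translation, rotation and rescaling I may assume $x_0=(1,0,\ldots,0)$ and that the exterior tangent ball is the unit ball $B$ centered at the origin. Then $h(x_0)=x_0$, and because $\Om\cap B=\emptyset$ while $\overline B\cap\overline\Om=\{x_0\}$, every $y\in\widetilde{\Om}$ satisfies $|y|<1$; hence $\widetilde{\Om}\subset B$, $\tilde{n}_i(x_0)=-e_1$, and $\la_0(-e_1)=-1$, with the maximum of $y_1$ on $\overline{\widetilde{\Om}}$ attained only at $x_0$.

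Next I would parametrize $\p\Om$ near $x_0$ as a $C^2$ graph $x_1=g(x')$ with $g(0)=1$ and $\nabla g(0)=0$. Since $\Om\subset\R^N\setminus B$, one has $g(x')\geq\sqrt{1-|x'|^2}$ for $|x'|$ small; comparing second-order Taylor expansions at $0$ gives the key curvature inequality $\mathrm{Hess}(g)(0)\geq -I_{N-1}$ in the symmetric matrix sense. Plugging $x=(g(x'),x')$ into $y=x/|x|^2$ and expanding through quadratic order in $x'$, I would check that $y'=x'+O(|x'|^3)$ (so the map is locally invertible to leading order) and that $\p\widetilde{\Om}$ is the graph $y_1=\phi(y')$ near $x_0$ with $\phi(0)=1$, $\nabla\phi(0)=0$, and the Hessian identity
\begin{equation*}
\mathrm{Hess}(\phi)(0)=-\bigl(\mathrm{Hess}(g)(0)+2I_{N-1}\bigr).
\end{equation*}
Combined with the curvature bound this yields $\mathrm{Hess}(\phi)(0)\leq -I_{N-1}$, so $\phi$ is strictly concave at $0$. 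Since $\widetilde{\Om}$ coincides with $\{y_1<\phi(y')\}$ in some neighborhood $U$ of $x_0$, this gives local strict convexity of $\widetilde{\Om}$ at $x_0$.

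Finally I would extract the non-empty cap. For $\e>0$ small, set $\la=-1+\e$ and consider $\Sigma_\la(-e_1)=\{y\in\widetilde{\Om}:y_1>1-\e\}$, whose reflection across $\{y_1=1-\e\}$ is $y\mapsto(2(1-\e)-y_1,y')$. A point $y$ in the cap satisfies $1-\e<y_1<\phi(y')$, so in particular $\phi(y')>1-\e$; consequently $2(1-\e)-\phi(y')<1-\e<y_1$, which rearranges to $2(1-\e)-y_1<\phi(y')$. Thus the reflected first coordinate still lies below the graph of $\phi$, and for $\e$ small enough both the cap and its reflection lie inside $U$, where being below $\phi$ is equivalent to being in $\widetilde{\Om}$. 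The same estimate works for every $\mu\in(-1,\la]$, so $\la^\star(-e_1)\geq -1+\e>\la_0(-e_1)$ and the maximal cap $\widetilde{\Sigma}$ is non-empty.

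The main technical point, and the only step where I expect real work, is the second-order expansion producing $\mathrm{Hess}(\phi)(0)=-(\mathrm{Hess}(g)(0)+2I_{N-1})$. The bound from (P) is only $\mathrm{Hess}(g)(0)\geq -I$ and is attained when $\p\Om$ osculates $\p B$ (for instance when $\p\Om$ coincides locally with $\p B$); the extra $+2I$ coming from the inversion is exactly what upgrades this possibly non-strict inequality to the strict bound $\mathrm{Hess}(\phi)(0)\leq -I$ required by the reflection argument in the last paragraph.
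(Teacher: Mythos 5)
Your proposal is correct and follows essentially the same route as the paper: after normalizing so the exterior tangent sphere is $\partial B$, both arguments write the inverted boundary near $h(x_0)$ as a graph $y_1=\phi(y')$ with the Hessian identity $\mathrm{Hess}\,\phi(0')=-\bigl(\mathrm{Hess}\,g(0')+2I_{N-1}\bigr)$, use the tangency $g(x')\geq\sqrt{1-|x'|^2}$ to obtain $\mathrm{Hess}\,g(0')\succeq -I_{N-1}$ and hence $\mathrm{Hess}\,\phi(0')\preceq -I_{N-1}$, and then extract a nonempty cap. The only discrepancies are cosmetic: you orient $x_0$ along $e_1$ rather than $e_N$, you obtain the Hessian identity by direct Taylor expansion where the paper applies the implicit function theorem to $F(y',y_N)$ and differentiates, and your final reflection estimate replaces the paper's comparison with $\gamma=\max G$ on a small sphere.
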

\begin{proof}
For convenience,  we assume $x_0=(0,\cdots,0,1),$  and $B$ is the unit ball with center at the origin such that $\p B\cap\p\Om=x_0.$ Let $\{(x', \psi(x'));
\|x\| < a\}$, $a>0$, denote a parametrization of $\partial \Omega$ in a neighborhood of $x_0$.  Hence
\begin{equation}\label{eq:der:psi:critical:point}
\psi (0')=1,\qq{and}  \nabla_{N-1}\psi(0')=0'.
\end{equation}

Let $h(\Om)$ stand for the image through the inversion map into the unit ball.
From definition,  $h(\p\Om\cap B(x_0))$ is given by
\begin{equation}\label{eq:reflected:bd}
h\Big(x',\psi(x')\Big)=\frac{ \left(x',\psi(x')\right)}{|x'|^2+\psi(x')^2}, \qq{for} x'\in{\mathcal N}.
\end{equation}
Set $y=h(x',\psi(x'))$ for $x'\in{\mathcal N}$ and with $y=(y',y_N).$  Since
$$
y'=\frac{x'}{|x'|^2+\psi(x')^2},\quad y_N=\frac{\psi(x')}{|x'|^2+\psi(x')^2},
\quad\mbox{and }
|y'|^2+y_N^2=\frac{1}{|x'|^2+\psi(x')^2},
$$
for $x'\in{\mathcal N},$ then
$x'=\frac{y'}{|y'|^2+y_N^2},$ for $y'\in{\mathcal N'},$
where $y'\in{\mathcal N'}$ if and only if $
y'=\frac{x'}{|x'|^2+\psi(x')^2}$ for some $x'\in{\mathcal N}.
$
Therefore
\begin{small}
$$ y_N=\frac{\psi\left(\frac{y'}{|y'|^2+y_N^2}\right)}{
|y'|^2+\psi\left(\frac{y'}{|y'|^2+y_N^2}\right)^2}, \qq{for} y'\in{\mathcal N'},
$$
\end{small}
and
$$h(\p\Om\cap B(x_0))=\left\{(y',y_N)\in\mathbb{R}^{N-1}\times\mathbb{R}: \ F(y',y_N)=0,\ y'\in{\mathcal N'}\right\},$$
where
\begin{equation}\label{def:F}
F(y',y_N):=y_N \left[|y'|^2+\psi\left(\frac{y'}{|y'|^2+y_N^2}\right)^2\right]-\psi\left(\frac{y'}{|y'|^2+y_N^2}\right).
\end{equation}


Differentiating \eqref{def:F} with respect to $y_N$  we obtain
\begin{eqnarray*}
  \frac{\p F}{\p y_N}(y',y_N) &=& \left[|y'|^2+\psi\left(\frac{y'}{|y'|^2+y_N^2}\right)^2\right]+y_N\frac{\p }{\p y_N}\left[|y'|^2+\psi\left(\frac{y'}{|y'|^2+y_N^2}\right)^2\right]\\
  && -\sum_{i=1}^{N-1}\frac{\p \psi}{\p y_i}\left(\frac{y'}{|y'|^2+y_N^2}\right)\frac{\p }{\p y_N}\left(\frac{y_i}{|y'|^2+y_N^2}\right).
\end{eqnarray*}
Substituting at $(y',y_N)=(0',1)$ and taking into account \eqref{eq:der:psi:critical:point}
\begin{small}
\begin{eqnarray*}
\frac{\p F}{\p y_N} (0',1)   &=& \left.  1+2\psi\left(\frac{y'}{|y'|^2+y_N^2}\right)\sum_{i=1}^{N-1}\frac{\p \psi}{\p y_i}\left(\frac{y'}{|y'|^2+y_N^2}\right)\frac{\p }{\p y_N}\left(\frac{ y_i}{|y'|^2+y_N^2}\right)\right|_{(y',y_N)=(0',1)}\\
    &=& 1\neq 0.
\end{eqnarray*}
\end{small}
Therefore, by the Implicit Function Theorem  there exists an open neighborhood of $0'$, $B_\de (0')\subset \mathbb{R}^{N-1}$ and a unique function $\phi : B_\de (0')\to\R,$ $\phi\in C^2(B_\de (0')),$ such that $\phi(0')=1,$ and
\begin{equation}\label{eq:F}
F(y',\phi (y'))=0\qq{for all}y'\in B_\de (0').
\end{equation}

Differentiating  \eqref{eq:F} with respect to $y_j,$ $j=1,\cdots, N-1,$ using the chain rule and substituting at the point $(0',1),$ we obtain
\begin{equation}\label{eq:der:phi}
\frac{\p F}{\p y_j}(0',1)+\frac{\p F}{\p y_N}(0',1)\frac{\p \phi}{\p y_j}(0')=0,\qq{for}j=1,\cdots N-1.
\end{equation}

On the other hand, differentiating  \eqref{def:F} with respect to $y_j$ and using the  chain rule we obtain
\begin{small}
\begin{equation*}
  \frac{\p F}{\p y_j}(y',y_N) = y_N\frac{\p }{\p y_j}\left[|y'|^2+\psi\left(\frac{y'}{|y'|^2+y_N^2}\right)^2\right] -\sum_{i=1}^{N-1}\frac{\p \psi}{\p y_i}\left(\frac{y'}{|y'|^2+y_N^2}\right)\frac{\p }{\p y_j}\left(\frac{y_i}{|y'|^2+y_N^2}\right).
\end{equation*}
\end{small}
Substituting at $(y',y_N)=(0',1)$ and taking into account \eqref{eq:der:psi:critical:point}
\begin{small}
\begin{equation*}
\frac{\p F}{\p y_j} (0',1)   = \left.  2\psi\left(\frac{y'}{|y'|^2+y_N^2}\right)\sum_{i=1}^{N-1}\frac{\p \psi}{\p y_i}\left(\frac{y'}{|y'|^2+y_N^2}\right)\frac{\p }{\p y_j}\left(\frac{ y_i}{|y'|^2+y_N^2}\right)\right|_{(y',y_N)=(0',1)}=0.
\end{equation*}
\end{small}
Consequently, by \eqref{eq:der:phi}
\begin{equation}\label{eq:der:phi:critical:point}
\nabla_{N-1} \phi(0')=0'.
\end{equation}

Let us define
$$ g(y'):=\psi\left(\frac{y'}{|y'|^2+\phi (y')^2}\right),
\quad\mbox{and }
 G(y'):=\frac{g(y')}{
|y'|^2+g(y')^2}, \qquad\mbox{for } y'\in B_\de (0').
$$
By  \eqref{eq:der:psi:critical:point},
$g(0')=1,$ and $G(0')=1.$ Moreover,
$$\left\{(y',y_N)\in\mathbb{R}^{N-1}\times\mathbb{R}: \ y_N=G(y'),\ y'\in B_\de (0')\right\}\subset h(\p\Om)\cap B(x_0),$$
and
$$\left\{(y',y_N)\in\mathbb{R}^{N-1} \times\mathbb{R}: \ y_N<G(y'),\ y'\in B_\de (0')\right\}\subset h(\Om)\cap B(x_0).$$
Let us see that   there exists   $0<\de'\leq \de$ such that
$$U:=\left\{(y',y_N)\in\mathbb{R}^{N-1} \times\mathbb{R}: \ y_N<G(y'),\ y'\in B_{\de'} (0')\right\},$$
is a convex set. To achieve this, we   use a characterization of convexity in the twice continuously differentiable case, see \cite[p. 87-88]{fenchel}.
{\it The set $U$ is a convex set if and only if $D^2G(y')$ is negative semidefinite for all $y'\in B_\de (0')$}.
In fact, we will prove that $D^2G(0')$ is negative definite and by continuity, there exists some $\de'>0$ such that $D^2G(y')$ is negative semidefinite for all $y'\in B_{\de'} (0').$
Differentiating
$$\frac{\p g}{\p y_j}=\sum_{i=1}^{N-1}\frac{\p \psi}{\p y_i}\left(\frac{y'}{|y'|^2+\phi (y')^2}\right)\frac{\p }{\p y_j}\left(\frac{y_i}{|y'|^2+\phi (y')^2}\right),$$
and
$$\frac{\p G}{\p y_j}=\frac{ \p_j g}{|y'|^2+g(y')^2}
- \frac{ 2g(y')\left(y_j+g\D\p_j g\right)}{\D\left(|y'|^2+g(y')^2\right)^2},\qq{for}j=1,\cdots N-1,$$
where $\p_j g =\frac{\p g}{\p y_j}.$  Substituting at $y'=0',$ and taking into account \eqref{eq:der:psi:critical:point} we deduce
\begin{equation}\label{eq:der:g:critical:point}
\nabla_{N-1} g(0')=0',
\qq{and}
\nabla_{N-1} G(0')=0'.
\end{equation}

Taking second derivatives for  $k=1,\cdots N-1,$
we obtain
\begin{eqnarray*}
  \frac{\p^2 g}{\p y_k\p y_j} &=& \sum_{i=1}^{N-1}\frac{\p}{\p y_k}\left[\frac{\p \psi}{\p y_i}\left(\frac{y'}{|y'|^2+\phi (y')^2}\right)\right]\frac{\p }{\p y_j}\left(\frac{ y_i}{|y'|^2+\phi (y')^2}\right)\\
  && +\sum_{i=1}^{N-1}\frac{\p \psi}{\p y_i}\left(\frac{y'}{|y'|^2+\phi (y')^2}\right)\frac{\p^2 }{\p y_k\p y_j}\left(\frac{y_i}{|y'|^2+\phi (y')^2}\right),
\end{eqnarray*}
and
\begin{eqnarray*}
  \frac{\p^2 G}{\p y_k\p y_j} &=& \frac{ \p^2_{kj} g}{|y'|^2+g(y')^2}
- \frac{ 2\p_j g(y')\left(y_k+g\D\p_k g\right)}{\D\left(|y'|^2+g(y')^2\right)^2}\\
  && - \frac{ 2\p_k g(y')\left(y_j+g\D\p_j g\right)+2 g(y')\p_k\left(y_j+g\D\p_j g\right)}{\D\left(|y'|^2+g(y')^2\right)^2}\\
  && +\frac{ 4g(y')\left(y_j+g\D\p_j g\right)\left(y_k+g\D\p_k g\right)}{\D\left(|y'|^2+g(y')^2\right)^3},
\end{eqnarray*}
where $\p^2_{kj}  =\frac{\p^2}{\p y_k\p y_j}.$
Substituting at $y'=0'$ and taking into account \eqref{eq:der:psi:critical:point} we deduce
\begin{equation*}
  \frac{\p^2 g}{\p y_k\p y_j}(0') = \left.\sum_{i=1}^{N-1}\frac{\p}{\p y_k}\left[\frac{\p \psi}{\p y_i}\left(\frac{y'}{|y'|^2+\phi (y')^2}\right)\right]\frac{\p }{\p y_j}\left(\frac{y_i}{|y'|^2+\phi (y')^2}\right)\right|_{y'=0'}.
\end{equation*}
Substituting at $y'=0'$ and taking into account \eqref{eq:der:g:critical:point} we deduce
\begin{eqnarray*}
\frac{\p^2 G}{\p y_k\p y_j}(0') &=& \left.\frac{ \p^2_{kj} g}{|y'|^2+g(y')^2}
 - \frac{ 2 g(y')\p_k\left(y_j+g\D\p_j g\right)}{\D\left(|y'|^2+g(y')^2\right)^2} \right|_{y'=0'}\\
 &=& \p^2_{kj} g(0')-2(\de_{jk}+\p^2_{kj} g(0'))=-2\de_{jk}-\p^2_{kj} g(0').
\end{eqnarray*}
Due to
\begin{equation*}
  \frac{\p }{\p y_j}\left(\frac{y_i}{|y'|^2+\phi (y')^2}\right) = \frac{\de_{ij}}{|y'|^2+\phi (y')^2}-\frac{2y_i
  \left(y_j+\phi\D\p_j\phi\right)}{\D\left(|y'|^2+g(y')^2\right)^2},
\end{equation*}
where $\de_{ij}$ is the Kronecker's delta, substituting at $y'=0'$ and taking into account \eqref{eq:der:phi:critical:point} we can write
\begin{equation}\label{eq:der:y:critical:point}
\left.\frac{\p }{\p y_j}\left(\frac{y_i}{|y'|^2+\phi (y')^2}\right)\right|_{y'=0'} = \de_{ij}.
\end{equation}
Moreover,
\begin{equation*}
   \frac{\p}{\p y_k}\left[\frac{\p \psi}{\p y_i}\left(\frac{y'}{|y'|^2+\phi (y')^2}\right)\right]=
   \sum_{m=1}^{N-1}\frac{\p^2 \psi}{\p y_m\p y_i}\left(\frac{y'}{|y'|^2+y_N^2}\right)\frac{\p }{\p y_k}\left(\frac{y_m}{|y'|^2+y_N^2}\right),
\end{equation*}
substituting at $y'=0'$ and taking into account \eqref{eq:der:y:critical:point} we can write
\begin{equation*}
\left.\frac{\p}{\p y_k}\left[\frac{\p \psi}{\p y_i}\left(\frac{y'}{|y'|^2+\phi (y')^2}\right)\right]\right|_{y'=0'}=
   \frac{\p^2 \psi}{\p y_k\p y_i}(0'),
\end{equation*}
Let $A:=\left(\p^2_{kj}\psi(0')\right)_{j,k=1,\cdots N-1},$ then
\begin{equation}\label{hess:g:G}
    \left(\p^2_{kj} g(0')\right)_{j,k=1,\cdots N-1} = A,
    \quad \mbox{and }
    \left(\p^2_{kj} G(0')\right)_{j,k=1,\cdots N-1} =-(2I_{N-1} +A),
\end{equation}
where $I_{N-1}$ is the identity matrix.

From hypothesis $\p B\cap\p\Om=x_0.$ Therefore the 'vertical' distance (distance in the $x_N$ coordinate) between $\p\Om$ and $\p B$ is strictly positive
i.e.
$$\psi(x')>\sqrt{1-|x'|^2}\qq{for all} x'\in{\mathcal N}\setminus 0'\qq{with} x=(x',x_N)\in\Om\cap B(x_0),$$
or equivalently
$$\left[\psi(x')\right]^2+|x'|^2 >1\qq{for all} x'\in{\mathcal N}\setminus 0'\qq{with} x=(x',x_N)\in\Om\cap B(x_0).$$
Set $H(x'):=\left[\psi(x')\right]^2+|x'|^2 $ for  $x'\in{\mathcal N}$ with $x=(x',x_N)\in\Om\cap B(x_0).$
Then  $H(0')=1$ and from the above inequality, the point $x'=0'$ is an strict minimum of the function $H$.
Due to \eqref{eq:der:psi:critical:point} every derivative of $H$ evaluated at $0'$ is zero, and necessarily  the Hessian matrix of $H$ must be semi positive definite, i.e.
\begin{equation}\label{semi+}
\left.\Big(\p_k\psi \p_j\psi+ \psi \p^2_{kj}\psi+\delta_{kj}\Big)_{j,k=1,\cdots N-1}\right|_{x'=0'} = A+I_{N-1},
\end{equation}
is a semi positive definite matrix.
Hence the matrix $-(A+2I_{N-1})$ is negative definite, and $y'=0'$ is a strict maximum of the function $G.$ As a consequence, there exists a $\de'>0$ such that the matrix $\left(\p^2_{kj}G(y') \right)_{j,k=1,\cdots N-1}$ is  negative definite for all $y'\in B_{\de'} (0').$ Consequently, the set $U$ is a convex set.

Le us now choose $\g =\max\{ G(y')\ |\ y'\in \p B_{\de'} (0')\}.$
Due to $y'=0'$ is a strict maximum of the function $G,$ and that $G(0')=1,$ then $\g<1.$ The cap $\widetilde{\Sigma}_{(1-\g)/2} (-e_N)$
and its reflection $\widetilde{\Sigma}'_{(1-\g)/2}(-e_N)$ are non empty sets contained in $h(\Om).$ Hence the maximal cap $\widetilde{\Sigma}$ contains $ \widetilde{\Sigma}_{(1-\g)/2} (-e_N),$ which in nonempty, which concludes that the maximal cap $\widetilde{\Sigma}$ is a nonempty.
\end{proof}

\begin{center}
{\sc Acknowledgement}
\end{center}

The  authors would like to thank Professor Jos\'e Arrieta of the Universidad Complutense for helpful discussion.

\end{document}